\documentclass[12pt,a4paper]{article}
\pdfoutput=1

\usepackage[T1]{fontenc}
\usepackage[utf8]{inputenc}
\usepackage[UKenglish]{isodate}
\cleanlookdateon

\usepackage{amsfonts,amsmath,amssymb,amsthm,mathtools} % Symbols & microtypography
\usepackage[usenames,dvipsnames]{xcolor} % Colours with names -- see https://www.overleaf.com/learn/latex/Using_colours_in_LaTeX for list
\usepackage[labelsep=period,labelfont=bf,justification=centering]{caption}
\usepackage{float,graphicx,subcaption}
\usepackage{booktabs,makecell}
\usepackage{enumitem,mdwlist}
\setlist[itemize]{topsep=0ex,itemsep=0ex,parsep=0.4ex}
\setlist[enumerate]{topsep=0ex,itemsep=0ex,parsep=0.4ex}

\usepackage{parskip,fullpage}
\usepackage{standalone}
\usepackage{thm-restate}

\usepackage[hyphens]{url} % Urls together with line breaking them
\usepackage[linktoc=all,hidelinks,colorlinks,unicode=true]{hyperref} % Must be loaded after url
\usepackage[capitalise,compress,nameinlink,noabbrev]{cleveref} % Must be loaded after hyperref
\hypersetup{linkcolor={blue!70!black},citecolor={black},urlcolor={blue!70!black}}

\usepackage{tikz}
\usetikzlibrary{snakes}

\newtheorem{theorem}{Theorem}
\newtheorem{lemma}[theorem]{Lemma}
\newtheorem{claim}[theorem]{Claim}
\newtheorem*{claim*}{Claim}
\newtheorem{observation}[theorem]{Observation}
\newtheorem{conjecture}[theorem]{Conjecture}
\newtheorem{corollary}[theorem]{Corollary}
\newtheorem{question}[theorem]{Question}
\theoremstyle{definition}

\newenvironment{poc}{\begin{proof}}{\end{proof}}

\renewcommand{\ge}{\geqslant}
\renewcommand{\le}{\leqslant}
\renewcommand{\emptyset}{\varnothing}

\newcommand{\defn}[1]{\textcolor{Maroon}{\emph{#1}}}

\newcommand{\mc}[1]{\mathcal{#1}}
\newcommand{\bb}[1]{\mathbb{#1}}

\renewcommand{\v}{\textup{\textsf{v}}}
\newcommand{\e}{\textup{\textsf{e}}}
\renewcommand{\d}{\textup{\textsf{d}}}

\DeclareMathOperator{\comp}{comp}

\DeclarePairedDelimiter{\set}{\{}{\}}

\title{Two Relaxations of the Dominating Hadwiger's  Conjecture}

\date{\today}

\author{
Ant\'onio Gir\~ao\footnotemark[2] \and Sergey Norin\footnotemark[1]\and Youri Tamitegama\footnotemark[4] \and Jane Tan\footnotemark[4]
}

\begin{document}
\maketitle

\renewcommand{\thefootnote}{\fnsymbol{footnote}} % Make affiliation marks symbols

\footnotetext[2]{University College London, UK
(\textsf{\href{mailto:snorine@gmail.com}{a.girao@ucl.ac.uk}}).}

\footnotetext[1]{McGill University, Canada
(\textsf{\href{mailto:snorine@gmail.com}{snorine@gmail.com}}).}

\footnotetext[4]{University of Oxford, UK
(\textsf{\{\href{mailto:tamitegama@maths.ox.ac.uk}{tamitegama},\href{mailto:jane.tan@maths.ox.ac.uk}{jane.tan}\}@maths.ox.ac.uk}).}

\renewcommand{\thefootnote}{\arabic{footnote}} 
\begin{abstract}
Illingworth and Wood recently proposed the Dominating Hadwiger's Conjecture, a strengthening of Hadwiger's Conjecture which asserts that every graph with no dominating $K_t$-model is $(t-1)$-colorable. 
We prove two relaxations of this conjecture. First, we show that every graph with average degree $Ct (\log t)^2$ contains a dominating $K_t$-model for some absolute constant $C$. This bound improves on the $2^{t-2}$ due to  Illingworth and Wood and is within an $O(\log t)$ factor from optimal. Second, we prove that the vertices of every graph with no dominating $K_t$-model can be partitioned into $t-1$ parts such that the subgraph induced by each part has bounded maximum degree.
\end{abstract}

\section{Introduction}

Hadwiger's famous conjecture, posed in 1943~\cite{hadwiger1943klassifikation}, asserts that every graph with no $K_t$-minor is $(t-1)$-colorable. This conjecture and its strengthenings and relaxations have been extensively explored; see Seymour~\cite{seymour2016hadwiger} for a survey. 

Notably, most of the conjectured strengthenings of Hadwiger's conjecture are now known to be false. Most recently, K{\"u}hn, Sauermann, Steiner and Wigderson~\cite{kuhn2025disproof} disproved the Odd Hadwiger's conjecture proposed by Gerards and Seymour in 1993. On the other hand, many natural relaxations of Hadwiger's conjecture are known to hold. For example, following a long history of improving bounds, Delcourt and Postle~\cite{delcourt2025reducing} have shown that  every graph with no $K_t$-minor is $O(t \log\log t)$-colorable, and Dujmovi\'c, Esperet, Morin and Wood~\cite{dujmovic2023clustered} proved the relaxation of Hadwiger's conjecture to clustered colorings.

A new tantalizing strengthening of Hadwiger's conjecture was recently proposed by Illingworth and Wood~\cite{IW25}.
For a graph $G$, a \defn{dominating $K_t$-model} in $G$ is a sequence $(X_1,\dotsc,X_t)$ of pairwise disjoint non-empty subsets of $V(G)$ such that each induced subgraph $G[X_i]$ is connected and every vertex of $X_j$ has a neighbour in $X_i$ whenever $i<j$. Note that replacing ``every vertex of $X_j$'' by ``some vertex of $X_j$'' recovers the standard definition of a \defn{$K_t$-model}, and a graph contains a $K_t$-model if and only if it contains a $K_t$-minor.

\begin{conjecture}[Dominating Hadwiger's Conjecture~\cite{IW25}]\label{c:DomHad}
Every graph with no dominating $K_t$-model is $(t-1)$-colorable.
\end{conjecture}

Illingworth and Wood proved \cref{c:DomHad} for $t \leq 4$, and Gir\~ao et al.~\cite{girao2026dominating} have very recently proved it for $t=5$, thereby generalizing the Four Color Theorem~\cite{appel1989every,robertson1997four}.

In this paper, we prove two relaxations of \cref{c:DomHad} which parallel known relaxations of Hadwiger's conjecture.  

The first concerns the minimum density needed to force a dominating $K_t$-model. 
In the classical, non-dominating setting, 
Kostochka~\cite{kostochka1984lower} and Thomason~\cite{thomason1984extremal} proved that graphs without $K_t$-minors have average degree $O(t\sqrt{\log t})$; constructions of Kostochka and Fern\'andez de la Vega~\cite{fernandez1983maximum} show that this order is best possible. This bound implies that every graph with no $K_t$-minor is $O(t\sqrt{\log t})$-colorable, which was the best known bound until the last decade.
 
Illingworth and Wood~\cite{IW25} showed that the bound for dominating models is different: random graphs with average degree $\Theta(t\log t)$ do not contain dominating $K_t$-models.
They also provided an upper bound of $2^{t-2}$ on the smallest average degree forcing dominating $K_t$-models.
In this paper, we narrow this gap by showing that graphs of average degree $\Omega(t(\log t)^2)$ contain dominating $K_t$-models. In particular, this implies that every graph with no dominating $K_t$-model is $O(t(\log t)^2)$-colorable. 

\begin{restatable}{theorem}{degmain}\label{thm:degmain}
There is an absolute constant $C>0$ such that, for every positive integer $t$, if $G$ is a graph with $\d(G)\ge Ct(\log t)^2$, then $G$ contains a dominating $K_t$-model.
\end{restatable}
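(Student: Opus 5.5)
We build the model greedily, one branch set at a time. At each stage we maintain a connected "host" subgraph $H$ on which the remaining branch sets will be placed, and we keep $H$ dense. First we pass from $G$ to a minor-minimal-type dense piece that we can use inside $G$ itself. Domination is not preserved under contraction, so we cannot pass to minors freely. Instead we take an induced subgraph $G_0$ of $G$ with $\d(G_0)\ge \d(G)/2$ and minimum degree at least $\d(G)/4$. Write $d := Ct(\log t)^2$.

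The inductive step goes as follows. Suppose $H$ is an induced subgraph with minimum degree $\delta\ge d_i$, where $d_i$ decreases slowly with $i$. We find a connected set $X$ of size $O(\log |H|/\log(\delta))$-ish, or more usefully of size $s=O(\delta^{-1}|H|\log t)$. The set $X$ should dominate a set $Y\subseteq V(H)\setminus X$ such that $H[Y]$ still has minimum degree at least $d_{i+1}= d_i - O(\log t\cdot d/t)$. We then recurse inside $H[Y]$. The branch sets found later lie in $Y$, so every one of their vertices has a neighbour in each earlier branch set; this is exactly the order condition ($i<j$ means $X_j$ is dominated by $X_i$).

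To make this work we first reduce to the case where $H$ is small relative to its degree, $|H|\le \delta\cdot \mathrm{poly}(t)$. This uses a standard density-increment step. If some set of $O(|H|/\delta)$ vertices (a random sample followed by a connecting tree) fails to dominate most of $H$, we instead find a denser subgraph. Concretely, we choose $X$ as a random set of about $(|H|/\delta)\log t$ vertices, together with short paths connecting them, which exist because of small diameter after the reduction. The random set dominates all but a $1/t$-fraction of vertices in expectation. We then delete the undominated vertices and iteratively delete vertices whose degree drops below $d_{i+1}$. Since each step removes a $1/\mathrm{poly}(t)$ fraction of edges, the average degree loss per step is $O(d/t)$ after accounting for the $\log t$ factor. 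Summed over $t$ steps, the total loss is at most $d/2$ when $C$ is large.

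The main obstacle is controlling $|H|/\delta$ throughout, so that the connecting paths for $X$ are cheap and the deletions do not destroy density. Connecting $X$ costs vertices, and each vertex removed into $X$ lowers neighbours' degrees. I would handle this by maintaining the invariant $|H|\le \delta\, t^{O(1)}$ by the following steps:

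1. At the start, apply a Mader-type lemma: take a subgraph that is minimal with respect to $e(H)\ge \tfrac{d}{4}|H|$, which gives small-order dense pieces with good expansion.
2. Use the expansion to find connecting paths of length $O(\log t)$.
3. Since $|X| = O(t^{O(1)}\log t)$ is tiny compared with $\delta$, removing $X$ costs little degree.

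The $(\log t)^2$ arises as one $\log t$ from domination via random sampling and one from the path lengths, i.e.\ from the diameter bound.
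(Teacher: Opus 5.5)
There is a genuine gap, and it sits at the step you flag as the main obstacle. The invariant $\v(H)\le\delta\,t^{O(1)}$ cannot be obtained by passing to subgraphs. Taking $H$ minimal with $\e(H)\ge\frac d4\v(H)$ gives minimum degree greater than $d/4$ and some connectivity, but no bound on $\v(H)$. The small-order phenomenon behind Mader/Thomason-type reductions comes from \emph{contraction}-minimality, which you rightly ruled out. For a concrete obstruction, let $G$ be $D$-regular of girth $g$ (Erd\H{o}s--Sachs) with $g$ large. By the Moore bound, every subgraph of average degree $\bar d\ge 2$ has at least $(\bar d-1)^{\Omega(g)}$ vertices, so no subgraph of minimum degree $\Omega(d)$ has $d^{O(1)}$ vertices. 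The diameter is also at least $\lfloor g/2\rfloor$, so the diameter argument fails too.

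Even granting the invariant, your bookkeeping does not close. $|X|\approx t^{O(1)}\log t$ is not small compared with $\delta=\Theta(t(\log t)^2)$. A loss of $O(d\log t/t)$ per round, summed over $t$ rounds, is $O(d\log t)>d$. Diameter bounds are in fact unnecessary. The components of a dominating set in a connected graph can be joined by adding at most two vertices per component, and a $p$-random sample has at most $\sum_v 1/(\deg(v)+1)$ components in expectation (\cref{lem:connect-dominating,lem:sampled-components}).

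Without the invariant, the real missing ingredient is control of $\sum_{x\in X}\deg(x)$, which is what removing $X$ actually costs. The $O(n/\d(H))$ connectors may each have degree up to $\Delta(H)$. So one round costs average degree of order $K+\log d$ with $K=\Delta(H)/\d(H)$ (\cref{lem:low-cost}). This is affordable only for almost-regular graphs, and your proposal never deals with skewed degree distributions.

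The paper handles this with the dichotomy of \cref{lem:dichotomy}, taking $L=d^{400}$:
\begin{itemize}
\item \emph{Almost-regular case.} There is an $O(\log d)$-almost-regular subgraph of average degree $h=\Omega(d/\log d)$. Here $t$ rounds of sample-and-connect need $h\gtrsim t\log d$, i.e.\ $d\gtrsim t(\log d)^2$. So the second logarithm comes from this regularization step, not from path lengths.
\item \emph{Unbalanced case.} There is a bipartite graph $(A,B)$ in which every vertex of $A$ has the same degree $d'$ and $|A|/|B|\ge (d')^{110}$. Every path between $A$-vertices then runs through $B$, whose average degree is at least $(d')^{111}$. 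The paper builds a dominating seed from a $p$-sample of $B$ and a very sparse sample of $A$ (\cref{lem:sparse-dominating-seed}), and merges components only when this is cheap. It then keeps a \emph{single} component $X$ whose neighbourhood still contains a bipartite graph with uniform $A$-degree, with the imbalance preserved up to a factor $1-d^{-12}$ (\cref{lem:component-localisation,lem:one-step}). It does not connect everything.
\end{itemize}
Your proposal has no counterpart to either case.
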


Our second main result involves relaxing the notion of proper coloring. Rather than requiring the subgraph induced by the vertex set of a given color to be edgeless, we require it to have bounded maximum degree.

Edwards, Kang, Kim, Oum, and Seymour~\cite{edwards2015relative} proved that the corresponding relaxation of Hadwiger's conjecture holds. Explicitly, they have shown that for every positive integer $t$ there exists $c_t$ such that if a graph $G$ has no $K_t$-minor then there exists a partition $(X_1,\dotsc, X_{t-1})$ of $V(G)$ such that $\Delta(G[X_i])\leq c_t$ for each $i$. Van den Heuvel and Wood~\cite{van2018improper} showed that the defect $c_t$ can be chosen to be $t-2$. We prove a strengthening of the result of~\cite{edwards2015relative} to graphs with no dominating $K_t$-models.

\begin{restatable}{theorem}{deficiency}\label{thm:deficiency}
For every  positive integer $t$ there exists $c_t>0$ such that the vertex set of any graph $G$ that does not contain a dominating $K_t$-model can be partitioned into $t-1$ parts $X_1,\dotsc, X_{t-1}$ with $\Delta(G[X_i])\leq c_t$ for each $i$.
\end{restatable}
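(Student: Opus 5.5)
We prove \cref{thm:deficiency} by induction on $t$, following the approach of Edwards, Kang, Kim, Oum and Seymour, with a dominating $K_{t-1}$-model in a suitable subgraph playing the role of the ordinary minor. For $t=1$ the statement is vacuous, since every graph with a vertex has a dominating $K_1$-model. For $t=2$, a graph with no dominating $K_2$-model has no edge inside any component, so it is edgeless and the single part works with $c_2=0$. Now assume the statement for $t-1$ with constant $c_{t-1}$.

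Let $G$ have no dominating $K_t$-model; we may assume $G$ is connected. By \cref{thm:degmain}, every subgraph of $G$ has average degree less than $D_t := Ct(\log t)^2$, so $G$ is $D_t$-degenerate. Fix a vertex $r$ and build BFS layers $L_0=\{r\},L_1,L_2,\dots$.

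The key observation concerns the vertices of layer $L_i$ lying in one component $K$ of $G[L_i\cup L_{i+1}\cup\cdots]$. Let $X_1$ be the connected set formed by $L_0\cup\dots\cup L_{i-1}$ (or, more economically, the BFS tree paths above). Then every vertex of $L_i$ has a neighbour in $X_1$. Suppose $G[L_i\cap K]$, or a suitable set $S$ of vertices of $K$ all adjacent to $X_1$, contains a dominating $K_{t-1}$-model $(Y_1,\dots,Y_{t-1})$. Prepending $X_1$ gives a dominating $K_t$-model, because every vertex of each $Y_j$ lies in $L_i$ and so has a neighbour in $X_1$. This is exactly where domination helps: prepending a dominating root set needs no extra connectivity work.

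Hence each $G[L_i]$ has no dominating $K_{t-1}$-model, and by induction it splits into $t-2$ parts of maximum degree at most $c_{t-1}$. The natural scheme is then to use colours $1,\dots,t-2$ inside the layers and to reserve colour $t-1$ for something else, much as in the alternating-layer argument for minors. However, layers $L_i$ and $L_{i+1}$ interact, so the same colour classes in consecutive layers could create high degree.

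To handle this, I would use the reserved colour $t-1$ for the set $H$ of vertices with many neighbours in an adjacent layer. The remaining low-degree interactions are absorbed into the defect. We need $G[H]$ to have bounded maximum degree, and the colourings inside layers, after removing $H$, must not create high degree across layers. Concretely, I would pick a threshold $N$ and show that if some vertex has more than $N$ neighbours in the next layer, these neighbours can be combined with degeneracy to build a dominating model. Alternatively, I would replace the layering by the Edwards et al. scheme: iteratively find a connected set $X$ together with the set $S$ of vertices having a neighbour in $X$, colour $G[S]$ by induction, and delete.

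The main obstacle is controlling the degree between the pieces coloured separately, namely the edges between $L_i$ and $L_{i+1}$ within the same colour. My first attempt is the following. Since $G$ is $D_t$-degenerate, orient edges so that every out-degree is at most $D_t$. When colouring $L_{i+1}$, I choose the parts so that each vertex of $L_{i+1}$ has few same-coloured in-neighbours from $L_i$. Vertices with unboundedly many neighbours in $L_{i+1}$ are few per component, and a high-degree vertex $v\in L_i$ can be added to $X_1$ to dominate its neighbourhood. This yields a dominating $K_{t-1}$-model inside $N(v)\cap L_{i+1}$ unless that neighbourhood is again recursively colourable. Working out this recursion, with $c_t$ bounded in terms of $c_{t-1}$ and $D_t$, is where I expect most of the effort to go.
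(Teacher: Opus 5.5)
There is a genuine gap. The proposal stops exactly where the proof has to happen, and the mechanism you sketch for that step does not work.

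\textbf{What is correct.} Your base cases are fine. So is the observation that each layer $G[L_i]$ has no dominating $K_{t-1}$-model: prepend the connected ball $L_0\cup\dots\cup L_{i-1}$ via \cref{obs:prepend}. But this only controls edges inside layers.

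\textbf{Where it breaks.} If one palette of $t-2$ colours is reused on every layer, nothing bounds the monochromatic degree between $L_i$ and $L_{i+1}$. The induction hypothesis hands you an arbitrary partition of each layer, and essentially your only freedom is to permute colours on components of $G[L_{i+1}]$. A single component can see every colour class of $L_i$ with unbounded multiplicity, so the degeneracy orientation has nothing to act on.

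\textbf{The proposed repair fails already for $t=3$.} Your repair is to give the reserved colour to the set $H$ of vertices with more than $N$ neighbours in an adjacent layer. Let $G$ be the tree obtained from a star with centre $c$ and leaves $\ell_1,\dots,\ell_M$, with $M\ge N+2$, by attaching $N+1$ pendant vertices to each $\ell_j$. A tree has no $K_3$-minor, hence no dominating $K_3$-model. Yet for every choice of BFS root, $c$ and all the $\ell_j$ lie in $H$, so $\Delta(G[H])\ge M$ is unbounded. The same example shows that high-degree vertices need not be few within a component. Nor can any dominating model be extracted from them, since $G$ has none.

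More fundamentally, layering keeps cross-layer edges bichromatic by alternating two palettes. That costs about $2(t-2)$ colours, which exceeds $t-1$ once $t\ge 4$.

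\textbf{The missing idea.} You need to abandon layering and argue by minimal counterexample with a degree split, as the paper does.
\begin{itemize}
\item Let $B$ be the set of vertices of degree greater than $c_t$, and let $A=V(G)\setminus B$.
\item Suppose some $a\in A$ had at most $t-2$ neighbours in $B$. Then a valid partition of $G\setminus\{a\}$ extends by placing $a$ in a part containing none of its $B$-neighbours. This is safe because $a$ and all its neighbours in that part have degree at most $c_t$ in $G$.
\item Hence every vertex of $A$ has at least $t-1$ neighbours in $B$.
\item Combined with $\d(G)<D_t$ from \cref{thm:degmain}, double counting degrees gives $|A|\ge (t-1)!\,|B|$ when $c_t=D_t+(t-1)!(D_t-t+1)$.
\item \cref{lem:regbip} then produces a dominating $K_t$-model in the bipartite graph between $A$ and $B$, keeping exactly $t-1$ edges at each $a\in A$. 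This contradicts the choice of $G$.
\end{itemize}

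\cref{lem:regbip} is where the induction on $t$ lives. You grow a maximal connected set $B'$, together with the vertices of $A$ having at least two neighbours in $B'$, subject to a sparsity condition. Then one of two things happens. Either the vertices with exactly one neighbour in $B'$ are numerous enough to recurse, after prepending $B'$ plus a small set dominating their remaining neighbours. Or $B'$ can be enlarged, contradicting maximality. So the induction on $t$ sits in this bipartite lemma rather than in the colouring statement, and the cross-layer problem never arises.
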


In \cref{sec:preliminaries}, we gather some basic notation and tools.
\cref{sec:degree} is dedicated to bounds on average degree that force a dominating $K_t$-model. We give a simple proof of a quadratic bound in \cref{sec:quadratic}; this is weaker than our main result, but the argument also applies to a question of Reed on rooted minors. \cref{sec:main} contains the more technical proof of \cref{thm:degmain}.
\cref{sec:deficiency} then treats \cref{thm:deficiency}, and we conclude with brief remarks in \cref{sec:remarks}.

\section{Preliminaries}\label{sec:preliminaries}
All logarithms are natural, and all graphs we consider are finite and simple.
For a graph $G$, we write $\v(G)=|V(G)|$ , $\e(G)=|E(G)|$, and $\d(G)=2\e(G)/\v(G)$ for its average degree. Let $\deg_G(v)$ (or $\deg(v)$ if $G$ is clear from context) be the degree of a vertex $v \in V(G)$. We use $\Delta(G)$ and $\delta(G)$ to denote the maximum and minimum degree of $G$, respectively. Let $\comp(G)$ denote the number of connected components of $G$.

If $S\subseteq V(G)$ let $N_G(S) = \set{ u \in V(G)\setminus S \: | \:  u\; \mathrm{has \; a \; neighbour \; in}\; S}$ be the (outer) \defn{neighbourhood} of $S$. We  omit the subscript when the ambient graph is clear from context, and write $N_G(v)$ instead  of $N_G(\{v\})$ for $v \in V(G)$. Let $\deg_S(v) = |N_G(v) \cap S|$ for $v \in V(G) \setminus S$, and let $G[S]$ denote the subgraph of $G$ induced by $S$.  

For a pair of disjoint subsets $A,B \subseteq V(G)$, let $G[A,B]$ denote the maximal bipartite subgraph of $G$ with bipartition $(A,B)$, i.e. $V(G[A,B])= A \cup B$ and $E(G[A,B])= \{uv \in E(G) \: | \: u \in A, v \in B\}$. 
Then let $\e_G(A,B)=|E(G[A,B])|$.

For a set $S\subseteq V(G)$, let $G\setminus S$ be the graph obtained from $G$ by removing vertices of $S$. 

 We say that $G$ is \defn{$d$-degenerate} if every non-empty subgraph contains a vertex of degree at most $d$. 

For disjoint $S,T \subseteq V(G)$ we say that $S$ \defn{dominates} $T$ if $T \subseteq N_G(S)$. The following observation will be repeatedly used in our constructions of  dominating models.
\begin{observation}\label{obs:prepend}
	Let $G$ be a graph, let $X \subseteq V(G)$ be non-empty such that $G[X]$ is connected, and let
	$H$ be a subgraph of $G[N(X)]$. If $H$ contains a dominating
	$K_s$-model $(T_1,\ldots,T_s)$, then $(X,T_1,\ldots,T_s)$
	is a dominating $K_{s+1}$-model in $G$.
\end{observation}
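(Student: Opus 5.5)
Observation~\ref{obs:prepend} follows directly from the definition of a dominating $K_{s+1}$-model, so I would simply check each defining condition for the sequence $(X,T_1,\ldots,T_s)$. Rename the parts as $X_1=X$ and $X_{i+1}=T_i$ for $1\le i\le s$.

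The sets are non-empty and each induces a connected subgraph of $G$:
\begin{itemize}
\item $X$ is non-empty and $G[X]$ is connected by hypothesis.
\item Each $T_i$ is non-empty and $H[T_i]$ is connected. Since $H$ is a subgraph of $G$, the graph $H[T_i]$ is a subgraph of $G[T_i]$ on the same vertex set. Adding edges preserves connectivity, so $G[T_i]$ is connected.
\end{itemize}

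The sets are pairwise disjoint:
\begin{itemize}
\item The $T_i$ are pairwise disjoint because they form a model in $H$.
\item Each $T_i$ lies in $V(H)\subseteq N(X)$, and the outer neighbourhood $N(X)$ is disjoint from $X$ by definition. Hence $X\cap T_i=\emptyset$ for every $i$.
\end{itemize}

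Finally, the domination condition must hold for every pair of indices $a<b$:
\begin{itemize}
\item If $a=1$, then every vertex of $X_b=T_{b-1}\subseteq N(X)$ has a neighbour in $X=X_1$, by the definition of $N(X)$.
\item If $2\le a<b$, then every vertex of $T_{b-1}$ has a neighbour in $T_{a-1}$ within $H$, because $(T_1,\ldots,T_s)$ is a dominating model in $H$. That edge is also an edge of $G$.
\end{itemize}

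Therefore $(X,T_1,\ldots,T_s)$ is a dominating $K_{s+1}$-model in $G$. No step is a genuine obstacle. The only points needing care are that connectivity and adjacency pass from the subgraph $H$ up to $G$, and that $N(X)$ is taken as the outer neighbourhood, which gives disjointness from $X$.
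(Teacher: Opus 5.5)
Your proof is correct. The paper states this observation without proof, treating it as immediate from the definition of a dominating model, and your check is exactly the intended verification: non-emptiness, connectivity lifted from $H[T_i]$ to $G[T_i]$, disjointness via the outer neighbourhood $N(X)$, and the domination condition.
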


In \cref{sec:degree}, we use the following standard Chernoff bounds; see, for example, \cite[Theorem~4.4]{mitzenmacher2017probability}.
\begin{lemma}[Chernoff bounds]\label{lem:chernoff}
If $X$ is binomial with mean $\mu$, then
\[
 \bb{P}(X\leq \mu/2)\leq \exp(-\mu/8)
 \quad\text{and}\quad
 \bb{P}(X\geq 2\mu)\leq \exp(-\mu/3).
\]
The same lower-tail estimate holds for a sum of independent Bernoulli variables with non-identical success probabilities.
\end{lemma}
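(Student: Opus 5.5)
The statement is the standard Chernoff bound, quoted from \cite[Theorem~4.4]{mitzenmacher2017probability}, so I would prove it by the usual exponential-moment method. Write $X=\sum_i X_i$ with the $X_i$ independent Bernoulli$(p_i)$ and $\mu=\sum_i p_i$. This covers both the binomial case ($p_i=p$) and the non-identical case in the final sentence. The two key facts are independence and $1+x\le e^x$. Together they give, for every real $\lambda$,
\[
\bb{E}e^{\lambda X}=\prod_i\bigl(1+p_i(e^\lambda-1)\bigr)\le \exp\bigl(\mu(e^\lambda-1)\bigr).
\]

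For the upper tail, take $\lambda>0$ and apply Markov's inequality:
\[
\bb{P}(X\ge 2\mu)\le e^{-2\lambda\mu}\,\bb{E}e^{\lambda X}\le \exp\bigl(\mu(e^\lambda-1-2\lambda)\bigr).
\]
Choosing $\lambda=\log 2$ gives the bound $(e/4)^\mu=\exp(-\mu(2\log 2-1))$. Since $2\log 2-1\approx 0.386>1/3$, this is at most $\exp(-\mu/3)$.

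For the lower tail, take $\lambda=-s$ with $s>0$ and apply Markov to $e^{-sX}$:
\[
\bb{P}(X\le \mu/2)\le e^{s\mu/2}\,\bb{E}e^{-sX}\le \exp\bigl(\mu(e^{-s}-1+s/2)\bigr).
\]
Choosing $s=\log 2$ gives the exponent $\mu\bigl(-\tfrac12+\tfrac12\log 2\bigr)=-\mu\cdot\tfrac{1-\log 2}{2}\approx -0.153\,\mu$. This is stronger than the required $-\mu/8=-0.125\,\mu$. The argument uses only the product formula for $\bb{E}e^{\lambda X}$, which holds for non-identical $p_i$, so the final remark of the lemma follows as well. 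The upper-tail argument also works for non-identical $p_i$, although the lemma does not ask for it.

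There is no real obstacle here. The only step needing care is the numerical comparison of the optimized exponents with the constants $1/8$ and $1/3$ in the statement, which I have checked above. In the paper itself I would simply cite the reference rather than reproduce this argument.
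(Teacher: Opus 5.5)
Your proof is correct and follows the same route as the paper, which gives no argument of its own and simply cites \cite[Theorem~4.4]{mitzenmacher2017probability}, a result proved by exactly this exponential-moment method; your choices $\lambda=\log 2$ and $s=\log 2$ are the optimal ones there for deviations $\delta=1$ and $\delta=1/2$. Your numerics check out, since $(e/4)^\mu=e^{-(2\log 2-1)\mu}\le e^{-\mu/3}$ and $e^{-\frac{1-\log 2}{2}\mu}\le e^{-\mu/8}$, and the bound $\prod_i\bigl(1+p_i(e^\lambda-1)\bigr)\le \exp\bigl(\mu(e^\lambda-1)\bigr)$ indeed covers non-identical success probabilities for both tails.
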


\section{Density forcing a dominating {$K_t$-model}}\label{sec:degree}

\subsection{A simpler proof of a weaker bound}\label{sec:quadratic}

We begin with a simple proof of a weakening of \cref{thm:degmain} showing that 
average degree at least $6t^2$ guarantees a dominating $K_t$-model. 
 
\begin{lemma}\label{l:degt2} Let $G$ be a connected graph with $\d(G) > 0$. Then for every $v \in V(G)$ there exist disjoint non-empty $S,T \subseteq V(G)$ such that $v \in S$, $G[S]$ is connected,
$T \subseteq N(S)$, and
	\begin{equation} \label{e:degt2}\d(G[T]) \geq \d(G) - 4\sqrt{\d(G)}.	\end{equation} 

\end{lemma}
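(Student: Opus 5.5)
The plan is an extremal (potential-function) argument rather than a BFS layering. BFS layers $L_0=\{v\},L_1,\dots$ do not work: if $G$ is bipartite, every edge joins consecutive layers and no layer $T=L_{i+1}$ contains any edges. So I will let $T$ be only part of $N(S)$, and allow vertices outside $S\cup T$ to be discarded. Write $d=\d(G)$ and fix a threshold $c=d/2-2\sqrt d$, so that the goal $\d(G[T])\ge d-4\sqrt d$ reads $\e(G[T])\ge c|T|$. If $d\le 16$ the right-hand side of \eqref{e:degt2} is non-positive, and $S=\{v\}$, $T=\{u\}$ for any neighbour $u$ of $v$ (which exists since $G$ is connected with $\d(G)>0$) already works. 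So assume $d>16$.

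Consider pairs $(S,R)$ with $S\ni v$ connected, $R\subseteq V(G)\setminus S$, and potential
\[
\Phi(S,R)=\e(G[R])-c|R|-\sqrt d\,|S| .
\]
The pair $(\{v\},V(G)\setminus\{v\})$ gives
\[
\Phi\ge \tfrac d2 n-\deg(v)-c(n-1)-\sqrt d = 2\sqrt d\,n-\deg(v)+c-\sqrt d .
\]
Using $\deg(v)\le n$ and $c\ge 0$, this is strictly positive. Choose $(S,R)$ maximising $\Phi$, breaking ties by making $|S|$ maximal, and set $T=R\cap N(S)$. The local optimality conditions I will derive are as follows.

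\begin{enumerate}
\item[(i)] Deleting a vertex $u$ from $R$ cannot increase $\Phi$, so $\deg_R(u)\ge c$ for every $u\in R$.
\item[(ii)] Since $\Phi>0$, the set $R$ is non-empty. Moreover, if $G[R]$ had a component $D$ avoiding $N(S)$, we would compare with deleting $D$ or with transferring a connecting path into $S$ (its cost is $\sqrt d$ per vertex). The aim is to show that one may assume every component of $G[R]$ meets $N(S)$, so $T\neq\emptyset$. This uses connectivity of $G$.
\item[(iii)] Moving a single vertex $a\in T$ from $R$ into $S$ keeps $S$ connected. It changes $\Phi$ by $-\deg_R(a)+c-\sqrt d$, so by maximality $\deg_R(a)\ge c-\sqrt d$, which is consistent with (i) and gives nothing new. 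More useful is moving all of $T$ into $S$ at once. Maximality gives
\[
\e(G[T])+\e_G(T,R\setminus T)\ge (c-\sqrt d)|T| .
\]
\end{enumerate}

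The crux is to show that few edges leave $T$ into $B=R\setminus T$, namely $\e_G(T,B)\le \sqrt d\,|T|$ up to lower-order terms. Combined with $\deg_R(a)\ge c$ for $a\in T$, this yields $2\e(G[T])\ge 2c|T|-2\e_G(T,B)$, hence $\d(G[T])\ge d-4\sqrt d-O(\sqrt d)$. The constants then need retuning by choosing $c$ and the per-vertex cost of $S$ so that the losses total exactly $4\sqrt d$. To bound $\e_G(T,B)$, I would use an averaging or Cauchy--Schwarz argument over which vertices of $T$ to absorb into $S$. Absorbing a vertex $a$ with many neighbours in $B$ turns all of those neighbours into new members of $T$. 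The trade-off between the $\sqrt d$ charge per vertex of $S$ and the $c$ credit per vertex of $R$ should force $\deg_B(a)\lesssim\sqrt d$ on average. The $\sqrt d$ scale is where the error term $4\sqrt{\d(G)}$ comes from: it balances a $\sqrt d$ cost per absorbed vertex against roughly $d/\sqrt d$ gained neighbours.

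I expect exactly this step, controlling $\e_G(T,B)$, to be the main obstacle. The naive potential $\Phi$ does not change when $S$ grows without shrinking $R$, so the tie-breaking rule, or a modified potential that rewards the edges inside $N(S)\cap R$, must do the real work. If the single potential fails, my fallback is an iterative version. Repeatedly absorb into $S$ one vertex $a\in T$ with $\deg_B(a)\ge\sqrt d$, and delete from $R$ all vertices of degree below $c$. Track that each absorption adds at least $\sqrt d$ vertices to $T$ while costing one vertex, so the process must terminate before $R$ is exhausted. At termination, every vertex of $T$ has fewer than $\sqrt d$ neighbours in $B$ and at least $c$ neighbours in $R$, giving the desired bound on $\d(G[T])$.
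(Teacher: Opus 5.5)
There is a genuine gap; in fact there are two.

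\emph{The single potential is degenerate.} The set $S$ enters $\Phi$ only through the term $-\sqrt d\,|S|$, and shrinking $S$ only enlarges the family of admissible $R$. For any admissible $(S,R)$, the pair $(\{v\},R)$ is also admissible, and $\Phi(\{v\},R)=\Phi(S,R)+\sqrt d(|S|-1)$. So every maximiser has $S=\{v\}$, and the tie-break never applies. Contrary to your remark, $\Phi$ strictly drops whenever $S$ grows. Hence $T\subseteq N(v)$, which spans no edges when $G$ is bipartite. This is exactly the obstruction you raised against BFS layers. Your inequality in (iii) is of the right, edge-counting, type. However, it is available only at a maximiser of $\Phi$, where $S=\{v\}$, and there no absorption argument can make $\e_G(T,B)$ small.

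\emph{The fallback loses a factor of $2$.} At termination you have $\deg_R(a)\ge c$ and $\deg_B(a)<\sqrt d$ for every $a\in T$. This gives $\deg_T(a)>c-\sqrt d$, and hence $2\e(G[T])=\sum_{a\in T}\deg_T(a)\ge c|T|-\e_G(T,B)$, not $2c|T|-2\e_G(T,B)$ as you wrote. So you only obtain $\d(G[T])>d/2-3\sqrt d$. This loss is intrinsic to pruning by minimum degree. For example, $K_{k,m}$ with $m\gg k$ has average degree close to $2k$, yet every subgraph has minimum degree at most $k$, so no minimum-degree certificate can yield $d-O(\sqrt d)$.

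Separately, ``costing one vertex'' is the wrong accounting. Absorbing $a$ removes $\deg_R(a)$ edges from $R$, which is unbounded under an absolute threshold of $\sqrt d$ new neighbours and can trigger pruning cascades. So you have not shown that $R$ or $T$ survives.

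\emph{What the paper uses instead} is a bound that counts each edge once against a budget of $d/2$ per vertex.
First, a minimal-counterexample argument reduces to the case where every proper subgraph has average degree $<d$; it reattaches $v$ via a shortest path when some proper subgraph is at least as dense. This gives $f(X)\ge\frac d2|X|$ for all $X$, where $f(X)$ is the number of edges meeting $X$.
Second, take $(S,T)$ with $|S|+|T|\ge d^{-1/2}f(S)$ and $S$ maximal. Maximality forces every $t\in T$ to have fewer than $d^{-1/2}\deg(t)$ neighbours outside $S\cup T$. This degree-proportional threshold is what keeps the total edge cost $f(S)$ of $S$ at most $\sqrt d(|S|+|T|)$.
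Finally, $\e(G[T])=f(S\cup T)-f(S)-\e_G(T,V(G)\setminus(S\cup T))\ge\frac d2(1-4d^{-1/2})|T|$ follows.
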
	

\begin{proof}
Let $d =  \d(G)$. If $d \leq 16$ then $S = \{v\}$ and $T = \{u\}$  for any neighbour $u$ of $v$ satisfy the conditions of the lemma. Thus we may assume that $d > 16.$

 We also assume without loss of generality that the conclusion of the lemma holds for every proper connected subgraph $H$ of $G$. This implies the following easy case.

\begin{claim}\label{claim:minimality0} If 
$\d(H) \geq  d$ for some proper subgraph $H$ of $G$ then the lemma holds.
\end{claim}

\begin{proof} We may assume that $H$ is connected by replacing $H$ by a connected component of $H$ with highest average degree. Let $P$ be a path in $G$ with ends $v$ and $v' \in V(H)$, chosen shortest. By our assumption there exist disjoint non-empty $S',T \subseteq V(H)$ such that $v' \in S'$, $G[S']$ is connected,
	$T \subseteq N(S')$, and
$$\d(G[T]) \geq \d(H[T]) \geq\d(H) - 4\sqrt{\d(H)} \geq   d - 4\sqrt{d}.$$
Then $S = S' \cup V(P)$ and $T$ satisfy the conclusion of the lemma for $G$.
\end{proof}

The preceding claim allows us to assume without loss of generality that 
$\e(H) <  \frac{d}{2} \v(H)$ for every proper subgraph $H$ of $G$. Now for a subset $X\subseteq V(G)$, let $f(X)$ denote the number of edges with at least one endpoint in $X$.	
\begin{claim}\label{claim:minimality}
	For any $X\subseteq V(G)$ we have
	\begin{equation} \label{e:minimality}
f(X) \geq \frac{d}{2}|X|.
	\end{equation}
\end{claim}
\begin{proof} If $X = \emptyset$ then the claim trivially holds. 
	If $X=V(G)$, then $f(V(G)) = \e(G) \geq  \frac{d}{2}\v(G)$ by the choice of $d$.
	So let $\emptyset \neq X \subsetneq V(G)$ and let $Y = V(G)\setminus X$. 
	By our earlier assumption we have $\e(G[Y])< \frac{d}{2} |Y|$. It follows that $$ \frac{d}{2} |V(G)| \leq \e(G) = f(X) + \e(G[Y]) \leq f(X) +  \frac{d}{2}|Y|$$ and therefore $f(X) \geq  \frac{d}{2} (|V(G)| - |Y|) =  \frac{d}{2}|X|$.
\end{proof}

Choose disjoint non-empty sets $S,T \subseteq V(G)$ satisfying the first three requirements of the lemma, i.e. $v \in S$, $G[S]$ is connected,
and $T$ is dominated by $S$, such that  $|S|+|T|\geq d^{-1/2}f(S)$ and subject to that  $S$ is maximal.

Let us first note that such a choice is possible. Indeed, let $S = \set{v}$ and let $T = N(v)$. Then $|S| + |T| > |T| = f(S) \geq  d^{-1/2}f(S).$

We claim that $S$ and $T$ satisfy the lemma. Suppose first that some $t \in T$ has at least  $d^{-1/2}\deg_G(t)$ neighbours in $V(G)-S-T$. Let $Z$ be the set of these neighbours, and define $S' = S \cup \set{t}$ and $T'  = (T \setminus \set{t}) \cup Z$. Then $$|S'| + |T'| = |S|+|T|+ |Z| \geq d^{-1/2}f(S) + |Z| \geq d^{-1/2}(f(S)+  \deg_G(t)) \geq   d^{-1/2}f(S'),$$
as  $|Z|\ge d^{-1/2}\deg_G(t)$ and
$f(S')-f(S)\le\deg_G(t)$, but this contradicts the choice of $(S,T)$ to maximize $S$.
It follows that every $t \in T$ has fewer than $d^{-1/2}\deg(t)$ neighbours in $V(G)-S-T$. Thus 
\begin{equation}\label{e:TW}
	\e(G[T,V(G)-S-T]) \leq d^{-1/2} \sum_{t \in T} \deg(t)  \leq 2d^{-1/2} f(T) \leq  2d^{-1/2} f(S\cup T).
\end{equation}
Combining all of the above, we have
\begin{align*}
	\e(G[T]) &= f(S\cup T)- f(S) - \e(G[T,V(G)-S-T]) \\
	&\stackrel{\eqref{e:TW}}{\geq}  f(S\cup T) - d^{1/2}(|S|+|T|) - 2d^{-1/2} f(S \cup T)\\
	&= (1- 2d^{-1/2}) f(S\cup T) -   d^{1/2} (|S|+|T|)\\
	&\stackrel{\eqref{e:minimality}}{\geq} (1- 2d^{-1/2}) \cdot \frac{d}{2}(|S|+|T|) -   d^{1/2} (|S|+|T|)\\ &\geq  \frac{d}{2} (1- 4d^{-1/2})|T|,
\end{align*}
Thus $ \d(G[T]) \geq d - 4\sqrt{d},$
and \eqref{e:degt2} holds.
\end{proof}	

\cref{l:degt2} immediately implies the claimed weakening of \cref{thm:degmain}.

\begin{theorem}\label{thm:degt2}
For any positive integer $t$, if $G$ is a graph with $\d(G)\ge 6t^2$, then $G$ contains a dominating $K_t$-model.
\end{theorem}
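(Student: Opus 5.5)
We argue by induction on $t$, iterating \cref{l:degt2} and prepending the connected set $S$ via \cref{obs:prepend}. The cleanest form of the inductive statement is: for every $t\ge 1$, every graph $G$ with $\d(G)\ge 6t^2$ contains a dominating $K_t$-model. For $t=1$ any nonempty graph works, taking $X_1$ to be a single vertex. We may also assume $\d(G)>0$ whenever $t\ge 2$.

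For the inductive step, let $t\ge 2$ and $\d(G)\ge 6t^2$. Pass to a connected component $G'$ of $G$ with $\d(G')\ge \d(G)$; such a component exists since the average degree of $G$ is a weighted average of the components' average degrees. Apply \cref{l:degt2} to $G'$ with any vertex $v$. This gives disjoint nonempty $S,T$ with $G[S]$ connected, $T\subseteq N(S)$, and
\[
\d(G[T])\ge d-4\sqrt d, \qquad d=\d(G')\ge 6t^2.
\]
Since $x\mapsto x-4\sqrt x$ is increasing for $x\ge 4$, we get
\[
\d(G[T])\ge 6t^2-4\sqrt6\,t .
\]
We need this to be at least $6(t-1)^2=6t^2-12t+6$. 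This amounts to $(12-4\sqrt6)t\ge 6$. Since $4\sqrt6\approx 9.80$, we have $12-4\sqrt6\approx 2.20$, so the inequality holds for $t\ge 3$.

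The inductive hypothesis then gives a dominating $K_{t-1}$-model $(T_1,\dots,T_{t-1})$ in $H=G[T]$, which is a subgraph of $G[N(S)]$. By \cref{obs:prepend}, $(S,T_1,\dots,T_{t-1})$ is a dominating $K_t$-model in $G$.

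The only real obstacle is the arithmetic for small $t$, which we handle directly. For $t=2$, any edge $uv$ gives the dominating $K_2$-model $(\{u\},\{v\})$; this needs only $\d(G)>0$, which holds since $\d(G)\ge 24$. So the induction effectively starts at $t=2$, and every step from $t-1$ to $t$ with $t\ge 3$ uses the inequality verified above.

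The constant $6$ is chosen exactly so that the loss of $4\sqrt{d}=O(t)$ per step is absorbed by the drop $6t^2-6(t-1)^2\approx 12t$.
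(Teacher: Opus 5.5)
Your proposal is correct and follows the paper's proof essentially step for step. Both argue by induction with trivial base cases $t=1,2$, reduce to a connected graph, apply \cref{l:degt2}, verify $6t^2-4\sqrt6\,t\ge 6(t-1)^2$ for $t\ge 3$, and prepend $S$ via \cref{obs:prepend}. You also spell out two details the paper leaves implicit: choosing a component with $\d(G')\ge \d(G)$, and the monotonicity of $x\mapsto x-4\sqrt x$.
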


\begin{proof}
The proof is by induction on $t$ with the base cases $t=1,2$ being trivial.
For the induction step, we assume without loss of generality that $G$ is connected. Let $S$ and $T$ be as in \cref{l:degt2}. We have
$$\d(G[T]) \geq  \d(G) - 4\sqrt{\d(G)} \geq 6t^2 - 4\sqrt{6}t \geq  6(t-1)^2,$$
	where the last inequality holds for $t \geq 3$. Thus $G[T]$ contains a dominating $K_{t-1}$-model by the induction hypothesis, implying that $G$ contains a dominating $K_{t}$-model by \cref{obs:prepend}.
\end{proof}	

\cref{l:degt2} can also be applied to give a partial answer to the following question of Reed. 

Let $H$ be a graph. A \defn{model} $\mu$ of $H$ in a graph $G$ is a collection $(\mu(w))_{w \in V(H)}$ of pairwise disjoint non-empty subsets of $V(G)$ such that
\begin{itemize}
	\item $G[\mu(w)]$ is connected for every $w \in V(H)$, and
	\item for every $ww' \in E(H)$ some edge of $G$ has one end in $\mu(w)$ and the other in $\mu(w')$.
\end{itemize}
It is easily seen that $G$ contains a model of $H$ if and only if $H$ is a minor of $G$.

Let $\mathrm{c}(H)$ denote the infimum of $d \in \bb{R}_{+}$ such that every graph $G$  with $\d(G) \geq d$ 
contains a model of $H$. Let $\mathrm{Rc}(H)$ (the $R$ is for rooted) be the infimum  
of  $d \in \bb{R}_{+}$ such that for every $w \in V(H)$, every connected graph $G$  with $\d(G) \geq d$ and every $v \in V(G)$, $G$ contains a model $\mu$ of $H$ such that $ v \in \mu(w)$ .

\begin{question}[Reed~\cite{reed2026barbados}]\label{q:Bruce} For which functions $f$ does $\mathrm{Rc}(H)<f(\mathrm{c}(H))$ hold for every graph $H$? In particular, is this true for $f(x)=x+c$ for some constant $c$?
\end{question}

While the second part of \cref{q:Bruce} remains open, 
\cref{l:degt2} implies the following. 

\begin{corollary} For every graph $H$, $\mathrm{Rc}(H) = \mathrm{c}(H) + O(\sqrt{\mathrm{c}(H)})$.
\end{corollary}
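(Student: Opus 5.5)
The inequality $\mathrm{Rc}(H) \geq \mathrm{c}(H)$ is immediate from the definitions. If every connected graph of average degree at least $d$ contains a rooted model of $H$, then every graph $G$ with $\d(G)\ge d$ contains an unrooted one: pass to a connected component of $G$ of maximum average degree, which still has average degree at least $d$. So it suffices to prove $\mathrm{Rc}(H)\le \mathrm{c}(H)+O(\sqrt{\mathrm{c}(H)})$.

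Write $c=\mathrm{c}(H)$. Let $G$ be connected with $\d(G)\ge d$, where $d$ is chosen so that $d-4\sqrt d > c$. Fix $v\in V(G)$ and $w\in V(H)$. By \cref{l:degt2} there are disjoint non-empty $S,T$ with $v\in S$, $G[S]$ connected, $T\subseteq N(S)$ and $\d(G[T])\ge d-4\sqrt d> c$. By the definition of $\mathrm{c}(H)$, the graph $G[T]$ contains a model $\mu'$ of $H$. Since $T\subseteq N(S)$, every vertex of $\mu'(w)$ has a neighbour in $S$, so $G[S\cup\mu'(w)]$ is connected. It is also disjoint from all other branch sets $\mu'(w')$, which lie in $T\setminus \mu'(w)$. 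Setting $\mu(w)=S\cup\mu'(w)$ and $\mu(w')=\mu'(w')$ for $w'\ne w$ therefore gives a model of $H$ with $v\in\mu(w)$; edges of $H$ remain realised because branch sets only grew.

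It remains to pick $d$. The condition $d-4\sqrt d>c$ holds once $\sqrt d>2+\sqrt{4+c}$, so $d=(3+\sqrt{c+4})^2=c+O(\sqrt c+1)$ suffices, and hence $\mathrm{Rc}(H)\le c+O(\sqrt{c}+1)$.

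The only delicate point is the additive constant when $c$ is small, which is absorbed into $O(\sqrt{\mathrm{c}(H)})$. For any $H$ with at least one edge we have $\mathrm{c}(H)\ge 1$, since a matching has average degree $1$, so $\sqrt c\ge 1$. Edgeless $H$ are handled separately: there $\mathrm{c}(H)=0$, and the model can be taken to be disjoint single vertices including $v$, after the usual conventions about the value of $\mathrm{Rc}$ for very small graphs. The main step is simply invoking \cref{l:degt2}; everything else is bookkeeping.
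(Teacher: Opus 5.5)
Your core argument is correct and is the paper's own proof: apply \cref{l:degt2}, use $\d(G[T])>\mathrm{c}(H)$ to find a model of $H$ in $G[T]$, and absorb $S$ into $\mu(w)$. Your explicit check that $\mathrm{Rc}(H)\ge\mathrm{c}(H)$ is a harmless addition; the paper takes that direction as obvious.

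The one thing to fix is your final paragraph, where both stated facts are false:
\begin{itemize}
\item $\mathrm{c}(K_2)=0$, since a matching contains $K_2$ and every graph of positive average degree has an edge.
\item An edgeless $H$ on $k\ge 3$ vertices has $\mathrm{c}(H)=k-2$, not $0$, witnessed by $K_{k-1}$.
\end{itemize}

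The correct dichotomy is as follows. If $H$ is not a minor of $K_2$, i.e.\ $H\notin\{K_1,2K_1,K_2\}$, then the single edge $K_2$ shows $\mathrm{c}(H)\ge 1$. Your main argument never used that $H$ has an edge, so it then gives $\mathrm{Rc}(H)\le \mathrm{c}(H)+O(\sqrt{\mathrm{c}(H)})$ with an absolute constant. For the three exceptional graphs, $\mathrm{Rc}(H)=\mathrm{c}(H)=0$ directly, because in a connected graph of positive average degree every vertex has a neighbour.

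The paper does not discuss this small-$\mathrm{c}$ issue at all; it implicitly reads $O(\sqrt{\mathrm{c}(H)})$ asymptotically.
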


\begin{proof}
	It suffices to show that if $d \in \bb{R}_{+}$ satisfies $d - 4\sqrt{d} > \mathrm{c}(H)$ and $d \geq 2$ then for every $w \in V(H)$, every connected graph $G$  with $\d(G) \geq d$, and every $v \in V(G)$, $G$ contains a model $\mu$ of $H$ such that $ v \in \mu(w)$.
	
	Let $S$ and $T$ be as in \cref{l:degt2}. In particular, $\d(G[T]) \geq \d(G) - 4\sqrt{\d(G)} \geq d - 4\sqrt{d} > \mathrm{c}(H)$. Thus $G[T]$ contains a model $\mu$ of $H$. Adding $S$ to $\mu(w)$ we obtain a model of $H$ in $G$ with $v \in \mu(w)$, as required.
\end{proof}

\subsection{A better bound}\label{sec:main}
\newcommand{\proofsection}[1]{\subsubsection{#1}}
\newcommand{\proofsubsection}[1]{\paragraph{#1}}
\newcommand{\maintheoremref}{\cref{thm:degmain}}

We now work toward the proof of \cref{thm:degmain}. Throughout this section, we will use $C,C_0,C_1,\ldots$ to
denote positive absolute constants. Constants without subscripts
may change from line to line, but numbered constants are fixed. 

Ideally, we would like to obtain a strengthening of \cref{l:degt2} with polylogarithmic loss in the degree. This can be done for graphs that are reasonably close to being regular by taking a random dominating subset and then joining together components via an iterative procedure, replacing the greedy process in \cref{l:degt2}. This is accomplished in \cref{sec:almostregular}.

The alternative, when $G$ does not have a suitable almost regular subgraph, requires a much fiddlier process. In this case, using a dichotomy originally established by  
Gir\~ao and Hunter \cite{girao2025induced}, we can assume that $G$ is a bipartite graph in which one side of the bipartition is much larger than the other and consists of vertices of the same degree. This is established in \cref{sec:dichotomy}. We again build the dominating set by starting with a randomly sampled subset and try to join together components, but here we need to carefully maintain sufficient degree and imbalance in each step. 
 The details of this argument are given in \cref{sec:unbalanced}. 
 
 Finally, \cref{sec:final} combines the almost-regular and highly unbalanced cases to derive \cref{thm:degmain}.

\proofsection{Almost regular vs. highly unbalanced dichotomy}\label{sec:dichotomy}

In this section we prove  a slight refinement of the key dichotomy of
Gir\~ao and Hunter \cite[Lemmas~4.2--4.5]{girao2025induced}. Let \(\Gamma\) be a bipartite graph with bipartition \((A,B)\).  We say that \(\Gamma\) is
\(K\)-\defn{almost-biregular} if $\deg_\Gamma(a)\leq \frac{K\e(\Gamma)}{|A|}$ for all $a\in A$, and $\deg_\Gamma(b)\leq \frac{K\e(\Gamma)}{|B|}$ for all $b\in B$. We use verbatim the following lemma from \cite{girao2025induced}, which was in turn borrowed from~\cite{janzer2023Erdos}.

\begin{lemma}[{\cite[Lemma~4.2]{girao2025induced}, \cite[Lemma~3.7]{janzer2023Erdos}}]\label{lem:almost-biregular}
Let \(K\geq 1\), and let \(\Gamma\) be a \(K\)-almost-biregular bipartite graph.  Then \(\Gamma\) has an induced
subgraph \(\Gamma'\) such that
\[
 \d(\Gamma')\geq \frac{\d(\Gamma)}4
 \qquad\text{and}\qquad
 \Delta(\Gamma')\leq 24K\d(\Gamma').
\]
\end{lemma}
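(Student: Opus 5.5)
The plan is to reduce to a nearly balanced bipartite graph by sampling the larger side, and then delete the few vertices whose degree becomes too large. Write $e=\e(\Gamma)$. By symmetry assume $|A|\ge |B|$, so that $\d(\Gamma)=2e/(|A|+|B|)\in[e/|A|,\,2e/|A|]$. Set $D=e/|A|$, so $\d(\Gamma)\le 2D$. Every $a\in A$ already has $\deg(a)\le KD$. The only obstruction to $\Delta\le O(K)\d$ is therefore on the $B$ side, where degrees can be as large as $Ke/|B|$, which is much larger than $D$ when $|A|\gg|B|$.

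\emph{Balanced case.} If $|A|\le 2|B|$ (say), take $\Gamma'=\Gamma$. Then $\Delta(\Gamma)\le Ke/|B|\le 2KD\le 2K\d(\Gamma)$ and we are done.

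\emph{Sampling step.} Otherwise, let $p=|B|/|A|<1$ and choose $A'\subseteq A$ by including each vertex independently with probability $p$. Then $\bb{E}|A'|=|B|$ and $\bb{E}\,\e(\Gamma[A'\cup B])=pe=|B|D$. For $b\in B$, the new degree $X_b=\deg_{A'}(b)$ is binomial with mean $\mu_b=p\deg(b)\le pKe/|B|=KD$. Vertices of $A'$ keep degree at most $KD$.

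\emph{Pruning step.} Let $B'$ be $B$ minus the vertices with $X_b>\lambda KD$, for a constant $\lambda$ (around $12$), and set $\Gamma'=\Gamma[A'\cup B']$. The number of edges lost is $\sum_b X_b\mathbf 1[X_b>\lambda KD]$. We bound its expectation by
\[
\bb{E}\bigl[X_b\mathbf 1[X_b>\lambda KD]\bigr]\le \frac{\bb{E}[X_b^2]}{\lambda KD}\le \frac{\mu_b(1+\mu_b)}{\lambda KD}\le \frac{2\mu_b}{\lambda},
\]
which uses $KD\ge 1$. This assumption is harmless: if $KD<1$ then $\Gamma$ has an isolated-edge-like structure, and a single edge (or a matching component) already works. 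So the expected loss is at most $2pe/\lambda$, a small fraction of $pe$.

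\emph{Choosing an outcome.} Consider the random variable
\[
Z=\e(\Gamma')-\tfrac{D}{4}\bigl(|A'|+|B|\bigr).
\]
Its expectation is at least $|B|D(1-2/\lambda)-\tfrac{D}{4}\cdot 2|B|>0$. Fix an outcome with $Z>0$. There
\[
\d(\Gamma')\ge \frac{2\e(\Gamma')}{|A'|+|B'|}\ge \frac{D}{2}\ge \frac{\d(\Gamma)}{4},
\qquad
\Delta(\Gamma')\le \lambda KD\le 2\lambda K\,\d(\Gamma').
\]
With $\lambda=12$ this gives exactly the constant $24$, and $\Gamma'$ is an induced subgraph as required.

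The main obstacle is bookkeeping the constants so that exactly $1/4$ and $24K$ come out. This means balancing the sampling rate, the threshold $\lambda$, and the weight on $|A'|$ in $Z$; I may need to tweak the balanced-case cutoff or $p$ (e.g.\ $p=2|B|/|A|$) to make the numbers fit. The tail estimate in the pruning step could alternatively use the Chernoff bound of \cref{lem:chernoff} instead of the second-moment estimate.
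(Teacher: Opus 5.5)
The paper does not prove this lemma; it quotes it verbatim from \cite{janzer2023Erdos} (via \cite{girao2025induced}), so there is no in-paper argument to compare with. Your sampling-and-pruning argument is a correct, self-contained proof. The constants already come out as stated, so no tweaking is needed.

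With $\lambda=12$ you get
\[
\bb{E}Z\ge |B|D\bigl(1-\tfrac{2}{\lambda}-\tfrac12\bigr)=\tfrac13|B|D>0
\]
whenever $\e(\Gamma)>0$. Any outcome with $Z>0$ then gives $\d(\Gamma')>D/2\ge\d(\Gamma)/4$ and $\Delta(\Gamma')\le 12KD<24K\,\d(\Gamma')$. The second-moment bound $X_b\mathbf{1}[X_b>T]\le X_b^2/T$ is what makes $24$ appear, so the Chernoff alternative is unnecessary.

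Three small repairs are worth making.

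\begin{enumerate}
\item Replace your justification of $KD\ge 1$. If $\e(\Gamma)=0$, take $\Gamma'=\Gamma$. Otherwise some $a\in A$ satisfies $1\le\deg(a)\le KD$, so $KD\ge1$ holds automatically. The case $KD<1$ forces $\e(\Gamma)=0$; it does not produce a ``matching-like'' structure.
\item The balanced case is redundant. The sampling step only uses $p=|B|/|A|\le 1$, i.e.\ $|A|\ge|B|$.
\item Note explicitly that $Z>0$ forces $\e(\Gamma')>0$. This makes $\Gamma'$ non-empty, so the density computation is legitimate.
\end{enumerate}
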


Next we need a strengthening of \cite[Lemma 4.3]{girao2025induced} which replaces $\log^2$ factors by single powers of $\log$. This improvement saves a $\log$ factor in \cref{thm:degmain}.

\begin{lemma}\label{lem:regularise}
There exists \(C_1 > 0\) with the following property.
Let \(d\geq 16\) and \(L\geq 2\), and let \(G\) be a graph satisfying
\[
 \d(G)\geq d
 \qquad\text{and}\qquad
 \Delta(G)\leq Ld.
\]
Then \(G\) contains a subgraph \(H\) such that
\[
 \d(H)\geq \frac{d}{C_1\log(2L)}
 \qquad\text{and}\qquad
 \Delta(H)\leq C_1\log(2L)\,\d(H).
\]
\end{lemma}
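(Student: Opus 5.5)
Let $n=\v(G)$. The plan is to split $V(G)$ into $O(\log(2L))$ degree classes. We locate one class carrying a $1/O(\log(2L))$ fraction of the edges together with a second class, so that the bipartite (or induced) graph between them is $O(1)$-almost-biregular. We then apply \cref{lem:almost-biregular}. The obstacle is that \cref{lem:almost-biregular} only gives $\Delta\le 24K\d$. So we need the almost-biregularity constant $K$ to be $O(\log(2L))$ or better, while losing only a single $\log(2L)$ factor in the density, not $\log^2$.

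\textbf{Step 1: remove low-degree vertices.} First pass to a subgraph $G_0$ with $\delta(G_0)\ge d/4$ and $\d(G_0)\ge d/2$ by repeatedly deleting vertices of degree below $d/4$. Since $\Delta(G_0)\le Ld$, every degree now lies in $[d/4,Ld]$. Partition $V(G_0)$ into classes $V_i=\{v:\deg(v)\in[2^{i}d/4,2^{i+1}d/4)\}$ for $0\le i\le k:=\lceil\log_2(4L)\rceil=O(\log(2L))$.

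\textbf{Step 2: pick a heavy pair of classes.} I would like to avoid summing over all $k^2$ pairs, which is where the $\log^2$ comes from. Instead, orient each edge from its endpoint of smaller class index to its endpoint of larger class index (ties broken arbitrarily). Assign every edge to the class of its \emph{higher-degree} endpoint. Some class $V_j$ then receives at least $\e(G_0)/(k+1)$ edges, and all of these edges go to vertices of degree at most roughly $2^{j+1}d/4$.

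Within $V_j$ every vertex has degree within a factor $2$ of $D:=2^{j}d/4$. Let $U$ be the set of other endpoints of these edges. Each $u\in U$ has degree at most $2D$, so in the bipartite graph $\Gamma$ between $V_j$ and $U$ (handled via a random bipartition if $U$ meets $V_j$) degrees on the $U$ side are at most $2D$. The average on that side is $\e(\Gamma)/|U|$, which could be much smaller than $D$. That is exactly the unbalanced case.

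\textbf{Step 3: repair the $U$ side (main obstacle).} To fix this, I would further dyadically split $U$ by $\deg_\Gamma(u)$ into classes $U_s$ with $\deg_\Gamma(u)\in[2^{s},2^{s+1})$. Discard the classes with $2^{s}$ below $\e(\Gamma)/(4(k+1)|U|)$; these carry at most a quarter of the edges. The remaining $s$ range over only $O(\log(2L))$ values, because $\e(\Gamma)/|V_j|\ge D/2$ forces $\e(\Gamma)/|U|\ge D/(2L)$. Naively picking the heaviest $U_s$ loses another $\log$. The trick I would try is to \emph{not} choose a single $U_s$. Instead, keep all of $U'=\bigcup U_s$ and accept $K=O(\log(2L))$. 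On $U'$ every degree is at most $2D$ and at least $\e(\Gamma)/(4(k+1)|U|)\gtrsim \e(\Gamma')/((k+1)|U'|)$. Meanwhile the $V_j$ side is $O(1)$-regular with respect to its average.

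Hence $\Gamma'=\Gamma[V_j,U']$ has $\e(\Gamma')\ge \e(G_0)/(2(k+1))$. It is $K$-almost-biregular with $K=O(\log(2L))$, because the max degree on the $U'$ side, $2D$, is within a factor $O(k)$ of $\e(\Gamma')/|U'|$.

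\textbf{Step 4: conclude.} Applying \cref{lem:almost-biregular} yields $H$ with $\Delta(H)\le 24K\d(H)=O(\log(2L))\d(H)$. For density, we need $\d(\Gamma')\gtrsim d/\log(2L)$, which holds if $|V_j|+|U'|$ is not much larger than $\e(\Gamma')/D$. This is the delicate point. $|V_j|\le 2\e(\Gamma')/D$ is fine, but $|U'|$ can be as large as $\e(\Gamma')/(D/L)$. If that happens, I would instead use the $U'$-side degree bound: swapping roles, the subgraph is then highly unbalanced, and the density bound must come from choosing $V_j$ at the maximizing scale.

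I expect this balancing to be the main difficulty. If it fails, the fallback is a weighted pigeonhole over $j$ with weights $2^{-j}$, so that the losses in $|U'|/|V_j|$ and in edge fraction telescope into one $\log(2L)$ factor.
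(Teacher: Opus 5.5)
There is a genuine gap, at the point you yourself flagged. In Step 3 you keep all of $U'$ and claim that $\Gamma'=\Gamma[V_j,U']$ is $O(\log(2L))$-almost-biregular because the maximum $U'$-side degree is within a factor $O(k)$ of $\e(\Gamma')/|U'|$. Your threshold only controls the \emph{minimum} degree on $U'$ relative to the average. Almost-biregularity is about the \emph{maximum}, which can exceed the average by a factor $\Theta(L)$. For example, suppose half the edges of $\Gamma$ go to $U$-vertices of $\Gamma$-degree $\Theta(D)$ and half to $U$-vertices of $\Gamma$-degree $\Theta(D/L)$. Then nothing is discarded, the $U'$-side average is $\Theta(D/L)$, and \cref{lem:almost-biregular} only gives $\Delta(H)\le O(L)\,\d(H)$. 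So you must restrict to a single dyadic scale on that side.

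The problem then moves to the $V_j$ side. Your premise that $V_j$ is $O(1)$-regular in $\Gamma$, i.e.\ $\e(\Gamma)/|V_j|\ge D/2$, does not follow from assigning each edge to its higher-degree endpoint. From $D|V_j|\le 2\e(G_0)$ you only get $\e(\Gamma)/|V_j|\ge D/(2(k+1))$. This is essentially tight: take the lowest nonempty class keeping just over a $2/(k'+2)$ fraction of its degree inside and spreading the rest evenly over the $k'=\Theta(k)$ higher classes, which are adjacent only to it. Picking one bucket $U_s$ then costs a further factor $O(k)$ of edges and leaves the $V_j$-side average at $D/O(k^2)$. So both the almost-biregularity constant and the density bound pick up $\log^2$ factors, and Step 4 does not resolve this.

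The missing idea is to make the side that absorbs the pigeonhole loss start out $O(1)$-regular, and to bound density through an absolute per-vertex threshold rather than through $|U'|$. The paper first ensures $\delta(G)\ge d/2$. It then chooses one of $r=\Theta(\log(2L))$ classes $V_i$ (degrees in $[D_i/2,D_i)$, with $D_i\ge d$) by \emph{total degree} $s\ge d\v(G)/r$, and takes a random half $A\subseteq V_i$. Some outcome has $m=\e_G(A,V(G)\setminus A)\ge s/8\ge d\v(G)/(8r)$ and $m/|A|\ge D_i/8$ simultaneously. With $B=V(G)\setminus A$, it deletes the vertices of $B$ with fewer than $q=d/(64r)$ neighbours in $A$, losing at most $q\v(G)\le m/8$ edges. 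It then keeps a \emph{single} dyadic bucket $B_j$ of $\deg_A$ carrying at least $m/(8r)$ edges.

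The factor-$r$ loss from this bucket choice only lowers the $A$-side average to $D_i/(64r)$ against a maximum of $D_i$, so $K=64r$, and the $B_j$ side is $2$-almost-biregular. Finally, $\d(\Gamma_j)\ge\min\{m_j/|A|,m_j/|B_j|\}\ge d/(64r)$, because every vertex of $B_j$ has at least $q$ neighbours in $A$ however large $B_j$ is. That last point is exactly what disposes of your worry about $|U'|$ being large.
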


\begin{proof}
Iteratively deleting vertices of degree less than \(d/2\) does not decrease the average degree below \(d\), so we may assume that
\(\delta(G)\geq d/2\).

Set $r=\left\lceil\log_2(2L)\right\rceil+1$ and define $D_i=2^{i-1}d$.
Partition \(V(G)\) into sets \(V_1,\ldots,V_r\) where $V_i=\{v:D_i/2\leq \deg_G(v)<D_i\}$.
Since \(\sum_{v \in V(G)} \deg_G(v)\geq d\v(G)\), there is some $i$ such that $V_i$ has total degree $s:=\sum_{u\in V_i}\deg_G(u)\geq d\v(G)/r$.

Randomly choose a subset \(F\subseteq V_i\) by putting each vertex of \(V_i\) in \(F\)
independently with probability \(1/2\). Observe that an edge with both ends in \(V_i\)
is counted twice in \(s\) and crosses \((F,V(G)\setminus F)\) with
probability \(1/2\), while an edge with exactly one end in \(V_i\) is counted
once and crosses with probability \(1/2\).  Hence $\bb{E} \left(\e_G(F,V(G)\setminus F)\right)\geq s/4$. 
At the same time, \(\bb{E}\left(|F|\right)=|V_i|/2\leq s/D_i\).  Therefore, some outcome
satisfies
\[
 \e_G(F,V(G)\setminus F)-\frac{D_i}{8}|F|\geq\frac s8.
\]
Fix such an \(F\), and set \(A=F\), \(B=V(G)\setminus F\). Then let
\(\Gamma=G[A,B]\) and let \(m=\e(\Gamma)\).

Let \(q=d/(64r)\), and delete from \(B\) all vertices with fewer than
\(q\) neighbours in \(A\). This deletes less than \(q\v(G)\leq m/8\) edges. We now partition the remaining vertices of \(B\) into buckets according to their
degree to \(A\), defined as $B_j=\{b \in B :q2^j\leq \deg_A(b)<q2^{j+1}\}$. Since \(\deg(b)\leq Ld\) for every $b \in B$, there are at most \(1+\lceil\log_2(64Lr)\rceil\leq4r\) non-empty buckets.
Thus one bucket, say \(B_j\), supports \(\frac{m}{8r}\) edges.  Let
\(\Gamma_j=\Gamma[A,B_j]\), and put \(m_j=\e(\Gamma_j)\). 

We will show that $\Gamma_j$ has a suitable subgraph $H$. Note from the definitions that $m \geq \frac{d\v(G)}{8r}$ and $\frac{m}{|A|} \geq \frac{D_i}{8}$. It follows that $\frac{m_j}{|A|} \geq \frac{D_i}{64r}$ and $\frac{m_j}{|B_j|} \geq q = \frac{d}{64r}$. Recalling that $A\subset V_i$, we have $\max_{a\in A}\deg_{\Gamma_j}(a)\leq D_i
 \leq 64r\,\frac{m_j}{|A|}$. Also, using the definition of $B_j$, we have  $\max_{b\in B_j}\deg_{\Gamma_j}(b)
 <2q2^j\leq 2\,\frac{m_j}{|B_j|}$.
Thus, \(\Gamma_j\) is \(64r\)-almost-biregular.  Furthermore,
\[
 \d(\Gamma_j)
 =\frac{2m_j}{|A|+|B_j|}
 \geq
 \min\left\{\frac{m_j}{|A|},\frac{m_j}{|B_j|}\right\}
 \geq\frac{d}{64r}.
\]
Applying \cref{lem:almost-biregular} gives a subgraph \(H\) with $\d(H)\geq\frac{d}{256r}$ and $\Delta(H)\leq 1536r\,\d(H)$.
Since \(r=\Theta(\log(2L))\), this proves the lemma.
\end{proof}

We now state the key dichotomy that will separate the almost regular and highly unbalanced cases in the proof.
It is analogous to \cite[Lemma 4.5]{girao2025induced}, but uses \cref{lem:regularise} in place of  \cite[Lemma 4.3]{girao2025induced}. We include a proof for completeness.

\begin{lemma}\label{lem:dichotomy}
There exists \(C_2 > 0\) such that the following holds.
Let \(d\geq 32\), \(L\geq 32\), and let \(G\) be an \(n\)-vertex
\(d\)-degenerate graph with \(\d(G)\geq d\).  Then at least one of the
following occurs:
\begin{enumerate}[label=\textup{(\roman*)}]
 \item\label{item:dichotomy-unbalanced}
 there is a partition \(V(G)=A\cup B\) such that $|A|\geq \frac L4|B|$ and $\e_G(A,B)\geq\frac{nd}{8}$;
 \item\label{item:dichotomy-regular}
 \(G\) contains a subgraph \(H\) such that $\d(H)\geq\frac{d}{C_2\log L}$ and $\Delta(H)\leq C_2\log L\cdot \d(H)$.
\end{enumerate}
\end{lemma}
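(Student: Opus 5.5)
We split according to how many edges are incident to high-degree vertices, apply \cref{lem:regularise} in the low-degree case, and use $d$-degeneracy to find the unbalanced partition in the high-degree case. Let $A$ be the set of vertices of degree at least $Ld$ and let $B=V(G)\setminus A$. The two cases are whether $\e(G[B])$ is large or small.

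\emph{Case 1: $\e(G[B])\geq nd/8$.} Then $G[B]$ has average degree at least $2\e(G[B])/|B|\geq d/4$ and maximum degree below $Ld$. Set $d'=d/4\geq 8$. To meet the hypothesis $d'\ge 16$ of \cref{lem:regularise}, take $d'=d/4$ with $d\ge 64$. For $32\le d<64$ we may instead use any single edge together with a suitably large $C_2$, since then $\d(H)=1\ge d/(C_2\log L)$ and $\Delta(H)=1$. We apply \cref{lem:regularise} to $G[B]$ with parameters $d'$ and $L'=4L$, noting $\Delta(G[B])\le Ld = L'd'$. This gives $H$ with $\d(H)\geq d/(4C_1\log(8L))$ and $\Delta(H)\leq C_1\log(8L)\d(H)$. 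Since $L\ge 32$, we have $\log(8L)\le 2\log L$, so \ref{item:dichotomy-regular} holds with $C_2=8C_1$ (or larger).

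\emph{Case 2: $\e(G[B])<nd/8$.} We aim for \ref{item:dichotomy-unbalanced} with the roles of the sides swapped: the low-degree set $B$ should be the large side and the high-degree set $A$ the small side. First, $|A|\cdot Ld\le\sum_v\deg(v)=2\e(G)$. Since $G$ is $d$-degenerate, $\e(G)\le dn$, so $|A|\le 2n/L$ and hence $|B|\ge n(1-2/L)$. This gives $|B|\ge \frac L4|A|$ for $L\ge 32$. Next, degeneracy gives $\e(G[A])\le d|A|\le 2dn/L\le nd/16$. Combining these,
\[
\e_G(A,B)=\e(G)-\e(G[A])-\e(G[B])\geq \frac{nd}{2}-\frac{nd}{16}-\frac{nd}{8}\geq\frac{nd}{8},
\]
using $\e(G)\ge nd/2$. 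Relabelling $A\leftrightarrow B$ yields \ref{item:dichotomy-unbalanced}.

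The main obstacle is the bookkeeping of constants: checking that the thresholds $nd/8$ combine correctly, and that the degeneracy bound $\e(G[A])\le d|A|$ suffices. It does, since $L\ge 32$ makes $2d n/L\le nd/16$. The only real input is \cref{lem:regularise}; the factor $4$ lost in the average degree and the replacement of $L$ by $4L$ are absorbed into $C_2$.
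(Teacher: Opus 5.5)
Your proof is correct and is essentially the paper's argument. You use the same split into vertices of degree at least $Ld$ and the rest, the same degeneracy bounds ($|\{v:\deg(v)\ge Ld\}|\le 2n/L$ and $\e(G[\cdot])\le d\,|\cdot|$), and the same application of \cref{lem:regularise} to the low-degree part. The only difference is that you case-split on $\e(G[\text{low}])\ge nd/8$, whereas the paper splits on $\e_G(A,B)\ge nd/8$. The paper's choice leaves $\e(G[\text{low}])\ge nd/4$, so the low-degree part has average degree at least $d/2\ge 16$ and $32\le d<64$ needs no separate treatment; your single-edge patch for that range is valid.
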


\begin{proof}
Let $B=\{v\in V(G):\deg_G(v)\geq Ld\}$ and let $A=V(G)\setminus B$.
Since a \(d\)-degenerate $n$-vertex graph has at most \(dn\) edges, we have $|B|Ld\leq\sum_{v\in V(G)}\deg_G(v)\leq 2dn$, and so \(|B|\leq 2n/L\). Hence, \(|A|\geq (L/4)|B|\).

If \(\e_G(A,B)\geq nd/8\), then \ref{item:dichotomy-unbalanced} holds and we are done.
Otherwise, degeneracy gives $\e(G[B])\leq d|B|\leq \frac{2dn}{L}\leq\frac{dn}{16}$. It follows that $\e(G[A])
 \geq\frac{dn}{2}-\frac{dn}{8}-\frac{dn}{16}
 \geq\frac{dn}{4}$. 
Thus \(\d(G[A])\geq d/2\) while
\(\Delta(G[A])<Ld\), so applying \cref{lem:regularise} to \(G[A]\) with
parameters \(d/2\) and \(2L\) gives us
\ref{item:dichotomy-regular}.
\end{proof}

\proofsection{Connected neighbourhoods in almost-regular graphs}\label{sec:almostregular}

Our next step is to establish \cref{thm:degmain} in the case when outcome (ii) of \cref{lem:dichotomy} holds. The main technical step in this part of the argument is \cref{lem:low-cost}, which can be viewed as an analogue of \cref{l:degt2}.

We start with an easy and well-known observation.

\begin{lemma}\label{lem:connect-dominating}
Let \(G\) be connected and let \(D\) be a non-empty dominating set in
\(G\).  If \(G[D]\) has \(q\) components, then there is a connected
dominating set \(X\supseteq D\) such that
\[
 |X\setminus D|\leq 2(q-1).
\]
\end{lemma}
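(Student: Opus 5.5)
We argue by induction on $q$, the number of components of $G[D]$. If $q=1$ we take $X=D$. If $q\ge 2$, the plan is to show that some two components of $G[D]$ lie at distance at most $3$ in $G$. Joining them along a shortest path then adds at most two new vertices and decreases the number of components by at least one. Since $D\cup\{\text{new vertices}\}$ is still dominating, repeating this $q-1$ times gives the bound $|X\setminus D|\le 2(q-1)$.

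The key step is the distance claim. Fix a component $K$ of $G[D]$ with $K\neq D$. Because $G$ is connected and $D\setminus K\neq\emptyset$, there is a shortest path $P=p_0p_1\dots p_\ell$ in $G$ from $K$ to $D\setminus V(K)$.

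Suppose $\ell\ge 4$ and consider the vertex $p_2$. By minimality of $\ell$, $p_2\notin D$. Since $D$ is dominating, $p_2$ has a neighbour $w\in D$. We now locate $w$.
\begin{itemize}
\item If $w\in K$, the path $wp_2p_3\dots p_\ell$ from $K$ to $D\setminus V(K)$ is shorter than $P$, a contradiction.
\item If $w\notin K$, the path $p_0p_1p_2w$ from $K$ to $D\setminus V(K)$ has length $3<\ell$, again a contradiction.
\end{itemize}
Hence $\ell\le 3$, so the interior $\{p_1,\dots,p_{\ell-1}\}$ of $P$ has at most $2$ vertices, all outside $D$ by minimality.

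Now set $D'=D\cup\{p_1,\dots,p_{\ell-1}\}$. Then $G[D']$ joins $K$ with the component of $G[D]$ containing $p_\ell$, possibly merging further components as well. So $G[D']$ has at most $q-1$ components, and $D'$ is still dominating. Applying induction to $D'$ gives a connected dominating set $X\supseteq D'\supseteq D$ with
\[
|X\setminus D|\le 2+2(q-2)=2(q-1).
\]
If $D'$ has even fewer components, the bound only improves.

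The only delicate point is the distance-$3$ claim, in particular the case analysis on where the dominating neighbour $w$ of $p_2$ lies, which the two cases above settle. Note also that connectivity of the final set $X$ comes for free, since $G[X]$ has a single component at the end of the induction.
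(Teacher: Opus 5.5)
Your proof is correct and uses the same idea as the paper: components of $G[D]$ can be linked by paths of length at most three, each link adds at most two new vertices, and $q-1$ links suffice. The only difference is bookkeeping. The paper builds the auxiliary graph $J$ on the components (adjacent when at distance at most three), proves $J$ connected by assigning each vertex of $G$ a component within distance one and walking along paths of $G$, and then adds all links along a spanning tree of $J$ at once. You instead merge one pair at a time by strong induction, getting the distance bound from your shortest-path argument at $p_2$ and using that the enlarged set $D'$ is still dominating.
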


\begin{proof}
Let \(\mathcal C\) be the set of components of \(G[D]\), and form a graph
\(J\) with vertex set \(\mathcal C\) in which two components are adjacent when their
distance in \(G\) is at most three.  We claim that \(J\) is connected.
Indeed, assign to each vertex of \(G \setminus D\) an arbitrary neighbour in \(D\),
and assign each vertex of \(D\) to the component of \(G[D]\) containing it.  Along any path in
\(G\), the components assigned to consecutive vertices are either equal
or have distance at most three.  Since \(G\) is connected, this proves the
claim.

For every edge of a spanning tree of \(J\), take a path of length at most
three between the corresponding components of \(G[D]\), and add its
internal vertices to \(D\).  At most two vertices are added for each of
the \(q-1\) tree edges.  The resulting set is connected, and it remains
dominating because it contains \(D\).
\end{proof}

\begin{lemma}
\label{lem:sampled-components}
Let \(G\) be a graph, and let \(S \subseteq V(G)\) be obtained by sampling every vertex independently
with probability \(p\in[0,1]\).  Then
\[
 \bb{E} \left(\comp(G[S])\right)
 \leq \sum_{v\in V(G)}\frac{1}{\deg_G(v)+1}.
\]
\end{lemma}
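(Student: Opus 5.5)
Fix an ordering of $V(G)$ and, for each component of $G[S]$, single out one representative vertex: the vertex of the component that is first in the ordering. Then $\comp(G[S])$ equals the number of representatives. We want a charging argument that bounds the expectation by $\sum_v 1/(\deg_G(v)+1)$ for every $p$, including $p=1$. This suggests using a random ordering, since for a uniformly random ordering the probability that $v$ precedes all its neighbours is exactly $1/(\deg_G(v)+1)$.

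So the plan is as follows.

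Take a uniformly random linear order $\sigma$ of $V(G)$, independent of $S$. Define $v$ to be \emph{$\sigma$-minimal} if $v$ precedes all its neighbours in $G$.

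The key claim is that $\comp(G[S])$ is at most the number of vertices $v\in S$ that are \emph{locally minimal in $S$}, meaning that $v$ precedes all of its neighbours lying in $S$. This holds because the $\sigma$-first vertex of each component of $G[S]$ precedes all of its $S$-neighbours, as those neighbours lie in the same component. Hence distinct components give distinct locally minimal vertices.

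This bound alone is not enough. A vertex of $S$ is locally minimal in $S$ with probability about $\mathbb{E}\bigl[1/(\mathrm{Bin}(\deg v,p)+1)\bigr]$, multiplied by $p$. So we should compute
\[
\mathbb{E}\,\comp(G[S]) \le \sum_v p\,\mathbb{E}\Bigl[\frac{1}{X_v+1}\Bigr], \qquad X_v\sim \mathrm{Bin}(\deg_G(v),p),
\]
which comes from conditioning on $S$ and averaging over $\sigma$: given $S$, the probability that $v$ is first among $\{v\}\cup (N(v)\cap S)$ is $1/(|N(v)\cap S|+1)$.

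Then we use the standard identity: for $X\sim\mathrm{Bin}(k,p)$,
\[
\mathbb{E}\Bigl[\frac{1}{X+1}\Bigr]=\frac{1-(1-p)^{k+1}}{(k+1)p}.
\]
This follows from $\frac{1}{j+1}\binom{k}{j}=\frac{1}{k+1}\binom{k+1}{j+1}$ and summing the binomial expansion. Multiplying by $p$ gives $\bigl(1-(1-p)^{k+1}\bigr)/(k+1)\le 1/(k+1)$. Summing over $v$ with $k=\deg_G(v)$ yields the lemma.

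The only delicate point is the first step. Each component must be charged to a vertex whose minimality is measured only against its neighbours inside $S$, not against all of $N(v)$. Measuring against all of $N(v)$ would fail, since a component need not contain a globally minimal vertex. This local-minimality injection is exactly what makes the binomial identity applicable. After that, the computation is routine, and the edge case $p=0$ is trivial.
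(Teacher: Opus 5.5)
Your proof is correct and follows the paper's route: a uniformly random ordering independent of $S$, with each component of $G[S]$ charged to its first vertex, which precedes all of its neighbours in $S$. The only difference is how you evaluate the probability of that event. The paper uses symmetry (conditional on $S\cap(N(v)\cup\{v\})\neq\varnothing$, each of the $\deg_G(v)+1$ vertices of $N(v)\cup\{v\}$ is equally likely to be first), whereas you condition on $S$ and apply the identity for $\mathbb{E}[1/(X+1)]$; both give exactly $(1-(1-p)^{\deg_G(v)+1})/(\deg_G(v)+1)$.
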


\begin{proof}
Choose a uniform random ordering of $V(G)$, independently of $S$. Under this ordering, the first vertex in each component of $G[S]$ precedes all of its neighbours in $S$. Conditional on $S\cap (\{v\}\cup N(v)) \neq \varnothing$, each of the $\deg_G(v)+1$ vertices in $N(v)\cup \{v\}$ is equally likely to be first, so
\begin{align*}
 \bb{E} (\comp(G[S]))  &\leq \sum_{v\in V(G)} \bb{P}(\text{$v$ is the first vertex of $S\cap (N(v)\cup \{v\})$})\\
&= \sum_{v\in V(G)} \frac{\bb{P}(S\cap (N(v)\cup \{v\}) \neq \varnothing)}{\deg_G(v)+1}\\
&\leq \sum_{v\in V(G)} \frac{1}{\deg_G(v)+1}. \qedhere
\end{align*}
\end{proof}

\begin{lemma}\label{lem:low-cost}
There are absolute constants \(C_3\) and \(d_{\alpha}\) with the following
property. Let \(d\geq d_{\alpha}\) and \(K\geq1\), and let \(G\) be a graph such that
\[
 \d(G)\geq d,\qquad \Delta(G)\leq Kd,
 \qquad d\geq C_3(K+\log d).
\]
Then there is a non-empty connected set \(X\subseteq V(G)\) such that
\(G[N_G(X)]\) contains a subgraph \(H\) satisfying
\[
 \d(H)\geq d-C_3(K+\log d).
\]
\end{lemma}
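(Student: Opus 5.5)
We will build the connected set \(X\) in two stages: first a small random dominating-type set, and then a cheap patch that makes it connected. First we reduce to a minimal situation. As in \cref{l:degt2}, we may pass to a subgraph \(G'\) with \(\d(G')\ge d\) that is minimal with respect to this property. Then \(\delta(G')\ge d/2\), \(\Delta(G')\le Kd\), and every proper subgraph has fewer than \(\frac d2\) edges per vertex. In particular, \(G'\) is connected, and \(f(Y)\ge \frac d2|Y|\) for every \(Y\subseteq V(G')\) by the argument of \cref{claim:minimality}. From now on we work inside \(G'\) and write \(G\) for it.

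\textbf{Stage 1: sampling.} Sample \(S\subseteq V(G)\) by including each vertex independently with probability \(p=\frac{C\log d}{d}\). This choice ensures \(p\le 1\) and keeps \(\bb{E}|S|=pn\) small compared with \(dn\). A vertex \(v\) with \(\deg(v)\ge d/2\) has no neighbour in \(S\) with probability at most \((1-p)^{d/2}\le d^{-C/2}\).

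\textbf{Stage 2: connecting.} Let \(U\) be the set of undominated vertices and put \(D=S\cup U\), which dominates \(G\). By \cref{lem:sampled-components} applied to \(S\), the expected number of components of \(G[S]\) is at most \(\sum_v \frac1{\deg(v)+1}\le \frac{2n}{d}\). Hence \(G[D]\) has at most \(\frac{2n}{d}+|U|\) components in expectation. \cref{lem:connect-dominating} then yields a connected dominating set \(X\supseteq D\) with
\[
|X|\le |S|+3|U|+\frac{4n}{d}.
\]
Its expectation is at most \(\frac{C\log d}{d}n+3nd^{-C/2}+\frac{4n}{d}=O\!\left(\frac{\log d}{d}\right)n\).

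We take \(H=G[N(X)]=G\setminus X\). This works because \(X\) is dominating, so every vertex outside \(X\) lies in \(N(X)\). Each vertex of \(X\) has degree at most \(Kd\), so
\[
\e(H)\ge \e(G)-Kd|X| \ge \frac{dn}{2}-O(K\log d)\,n .
\]
Dividing by \(\v(H)\le n\) gives only \(\d(H)\ge d-O(K\log d)\). The target loss is additive \(K+\log d\), not multiplicative, so this crude count is too weak.

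\textbf{Main obstacle.} The crude bound fails because the degrees of vertices in \(X\) must not be charged at the full rate \(Kd\). To fix this, I would use minimality. Removing \(X\) deletes \(f(X)\) edges, and we also lose \(|X|\) vertices. The right quantity to compute is
\[
\d(H)=\frac{2(\e(G)-f(X))}{n-|X|}.
\]
For random \(S\), we have \(\bb{E}f(S)\le \sum_v p\deg(v)=2p\,\e(G)\). So the sampled vertices cost on average only their typical degree, which is what \(\e(G)=\frac d2 n\) already accounts for.

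Write \(\e(G)=\frac d2 n\) (up to the minimal slack) and use \(f(X)\le \sum_{x\in X}\deg(x)\). For \(x\in S\) the expected total is about \(dp n\), matching the \(\frac d2\cdot 2|S|\) budget. This leaves
\[
\d(H)\gtrsim \frac{dn-d|S|-Kd\,(|X\setminus S|)}{n-|S|-|X\setminus S|}.
\]
There is a subtlety: each sampled vertex is charged \(\deg\) but only frees one vertex worth \(\frac d2\) edges. I would handle this by choosing \(H\) more cleverly, as \(G[T]\) where \(T=N(X)\), and by counting only edges with both ends in \(T\) relative to \(|T|\). Alternatively, I would sample with probability proportional to \(1/\deg\) to balance the charges.

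The patching vertices \(X\setminus S\) number \(O(n/d)+|U|\). Each is charged \(Kd\), contributing a loss of \(O(K)\) in average degree, and the \(\log d\) loss comes from the \(S\)-part. So the decisive step is to bound the expected sum \(\sum_{x\in S}\deg(x)\) against \(\frac d2|S|\) plus lower-order terms. For this I would exploit the minimality inequality \(f(Y)\ge\frac d2|Y|\) applied to \(Y=V\setminus X\), possibly choosing \(d'\) equal to the exact average degree of the minimal subgraph so that losses are measured additively. I expect this accounting, getting \(C_3(K+\log d)\) rather than \(K\log d\), to be the hardest part.
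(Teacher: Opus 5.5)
Your construction is the paper's: a minimal subgraph $Q$ (giving $\delta(Q)>d/2$ and $\d(Q)\le 2d$), a $p$-random set $S$ with $p=\Theta(\log d/d)$, the undominated set $U$, patching via \cref{lem:connect-dominating} with \cref{lem:sampled-components} controlling $\comp(Q[S])$, and $H=Q\setminus X=Q[N_Q(X)]$. You also correctly see why charging every vertex of $X$ at $Kd$ fails, and that sampled vertices should be charged at their actual degrees. The gap is that you then stop: you call the remaining accounting the hardest part and leave it open.

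The step you single out as decisive, bounding $\sum_{x\in S}\deg(x)$ by $\frac d2|S|$ plus lower-order terms, is both false and unnecessary. It is false because the sum always exceeds $\frac d2|S|$ (as $\delta(Q)>d/2$), and in expectation it equals $pn\,\d(Q)\approx d\,\bb{E}|S|$. It is unnecessary because the lemma allows an additive loss of $C_3(K+\log d)$. Your ``subtlety'' (each sampled vertex costs about $d$ edges but frees only one vertex) is a net loss of roughly $\frac d2 pn=O(n\log d)$ edges. That is an $O(\log d)$ drop in average degree, which is exactly the $\log d$ term the statement permits. So neither sampling proportionally to $1/\deg$ nor a cleverer choice of $H$ is needed.

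The missing step is a direct expectation bound on a single random variable. Since $|X\setminus D|\le 2(\comp(Q[S])+|U|)$ and all degrees are at most $Kd$,
\[
\sum_{x\in X}\deg_Q(x)\le Z:=\sum_{v\in S\cup U}\deg_Q(v)+2Kd\bigl(\comp(Q[S])+|U|\bigr).
\]
The three contributions to $\bb{E}(Z)$ are:
\begin{itemize}
\item $\bb{E}\sum_{v\in S}\deg_Q(v)=pn\,\d(Q)\le 2pdn=O(n\log d)$;
\item $\bb{E}\sum_{v\in U}\deg_Q(v)\le Kd\cdot nd^{-C/2}$, which is negligible;
\item $2Kd\,\bb{E}\bigl(\comp(Q[S])+|U|\bigr)\le 2Kd(2n/d+nd^{-C/2})=O(Kn)$.
\end{itemize}
Fix an outcome with $Z\le\bb{E}(Z)=O(n(K+\log d))$. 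Then $\e(H)\ge\e(Q)-Z\ge\frac{dn}{2}-O(n(K+\log d))$. Dividing by $\v(H)\le n$ gives $\d(H)\ge d-O(K+\log d)$; there is no need to credit the removed vertices. Taking $C_3$ large also ensures $\e(H)>0$, so $H$ is non-empty.

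If you prefer not to use the upper bound $\d(Q)\le 2d$, the identity $\bb{E}\bigl(\e(Q)-\sum_{v\in S}\deg_Q(v)\bigr)=(1-2p)\e(Q)\ge(1-2p)\frac{dn}{2}$ works equally well. Finally, your bound $|X|\le|S|+3|U|+4n/d$ holds only in expectation. Bundling everything into $Z$ before fixing an outcome, as the paper does, avoids this.
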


\begin{proof}
Let $Q$ be chosen as an induced subgraph of \(G\) with average degree at
least \(d\) and as few vertices as possible. Then
\(Q\) is connected.  Let \(n=\v(Q)\), and write $\e(Q)=\frac{dn}{2}+s$ with $s\geq0$.
By the minimality of \(Q\), every \(v\in V(Q)\) has degree satisfying 
\[\frac{2(\e(Q)-\deg_Q(v))}{n-1}<d.\] 
Hence, $\deg_Q(v)>\frac d2+s$, and we get $s<\frac{dn}{2(n-2)}$ by summing over $v$. Taking \(d_{\alpha}\) to be large enough so that \(n\geq4\) and using the bound on $s$ gives $\delta(Q)>\frac d2$, $\d(Q)\leq2d$, and $\Delta(Q)\leq Kd$.

Now let \(p=20\log d/d\) and assume that $d_{\alpha}$ is sufficiently large so that \(p\leq1\). Choose a random subset
\(S\subseteq V(Q)\) by sampling every vertex independently with
probability \(p\). Let $U=\{v\in V(Q):N_Q(v)\cap S=\varnothing\}$. Importantly, the set \(D:=S\cup U\) dominates \(Q\). 

Next, we turn to connectedness. Applying \cref{lem:connect-dominating} to \(D\) in \(Q\) produces a
connected dominating set \(X\supseteq D\) by adding at most
\(2(\comp(Q[S])+|U|-1)\) vertices.  Consequently,
\begin{equation}\label{e:lowcost3}
 \sum_{x\in X}\deg_Q(x)\leq  \sum_{v\in S\cup U}\deg_Q(v) +2\Delta(Q)(\comp(Q[S])+|U|) :=Z.
\end{equation}
Here, $Z$ is a bound on the total degree cost of constructing a connected dominating set, where the first summation is the cost of the initial dominating set $D$, and the second term is an upper bound on the cost of connecting $D$. 

We now bound the expected value of $Z$.
By \cref{lem:sampled-components} we know that $\bb{E}(\comp(Q[S]))\leq\frac{2n}{d}$. Moreover, for every \(v\) we have $\bb{P}(v\in U)
 =(1-p)^{\deg_Q(v)}
 \leq \exp(-pd/2)=d^{-10}$
and hence $\bb{E}(|U|)\leq nd^{-10}$. Recalling that $\d(Q)\leq2d$, we can also compute 
\[\bb{E}\Big(\sum_{v\in S}\deg_Q(v)\Big) = p n\d(Q)\leq \frac{20\log d}{d}  n 2d = 40n\log d\]
and 
\[
\bb{E}\Big(\sum_{v\in U}\deg_Q(v)\Big) = \sum_{v\in V(Q)}\deg_Q(v)\bb{P}(v\in U) \leq d^{-10}n\d(Q)\leq2nd^{-9}.
\]

Combining the above expectations (and using $\Delta(Q)\leq Kd$) gives
\begin{align*}
\bb E(Z)
\leq 40n\log d+2nd^{-9}
    +2Kd\left(\frac{2n}{d}+nd^{-10}\right)\leq Cn(K+\log d)
\end{align*}
for an absolute constant $C$. Hence, we may fix an outcome for which 
\[
\sum_{x\in X}\deg_Q(x) \leq Cn(K+\log d).
\]

Set \(R=V(Q)\setminus X\).  Since \(X\) dominates \(Q\), we have
\(R\subseteq N_Q(X)\), and 
\[
 \e(Q[R])
 \geq \e(Q)-\sum_{x\in X}\deg_Q(x)
 \geq \frac{dn}{2}-Cn(K+\log d).
\]
Since $d\geq C_3(K+\log d)$, choosing $C_3>2C$ ensures that $\e(Q[R])>0$. In particular, $R\neq\varnothing$, and, since $|R|\leq n$,
\[
\d(Q[R])
=\frac{2\e(Q[R])}{|R|}
\geq \frac{2\e(Q[R])}{n}
\geq d-2C(K+\log d)
\]
so we may take $H=Q[R]$.
\end{proof}

\begin{corollary}
\label{cor:almost-regular-model}
There is an absolute constant \(C_4\) such that the following holds.
Let \(r\geq1\), \(K\geq1\), and \(d\geq d_{\alpha}\).  If $\d(G)\geq d$, $\Delta(G)\leq Kd$, and $d\geq C_4r(K+\log d)$, 
then \(G\) contains a dominating \(K_r\)-model.
\end{corollary}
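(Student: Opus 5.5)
We argue by induction on \(r\), iterating \cref{lem:low-cost} together with \cref{obs:prepend}, in the same way \cref{thm:degt2} iterates \cref{l:degt2}. The case \(r=1\) is trivial: \(\d(G)\geq d>0\), so \(G\) has a vertex, which forms a dominating \(K_1\)-model. For \(r\geq2\), put \(\varepsilon:=C_3(K+\log d)\). Choosing \(C_4\geq C_3\) gives \(d\geq C_3(K+\log d)\), so \cref{lem:low-cost} applies. It yields a non-empty connected \(X\) and a subgraph \(H\subseteq G[N_G(X)]\) with \(\d(H)\geq d-\varepsilon\). By \cref{obs:prepend}, it suffices to find a dominating \(K_{r-1}\)-model in \(H\).

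To apply induction to \(H\) we must check the hypotheses with new parameters \(d'=d-\varepsilon\) and \(K'\).

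\emph{Maximum degree.} We have \(\Delta(H)\leq\Delta(G)\leq Kd=K'd'\) where \(K'=Kd/d'\). Since \(d\geq C_4 r(K+\log d)\geq 2\varepsilon\) once \(C_4\geq 2C_3\), we get \(d'\geq d/2\) and \(K'\leq 2K\). However, \(K'\) grows at every step, so it is cleaner to avoid plain induction on the statement.

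\emph{Direct iteration instead.} We run the process directly \(r-1\) times, keeping the original \(K\) and \(d\) as reference. Set \(G_0=G\) and \(d_0=d\). At step \(i\), apply \cref{lem:low-cost} to \(G_i\) with average-degree parameter \(d_i\) and degree-ratio parameter \(K_i=Kd/d_i\). This produces \(X_i\) and \(G_{i+1}=H\) with
\[
d_{i+1}\geq d_i-C_3(K_i+\log d_i).
\]
We show by induction that \(d_i\geq d/2\) for all \(i\leq r-1\). Under that assumption, \(K_i\leq 2K\) and \(\log d_i\leq\log d\), so each step loses at most \(2C_3(K+\log d)\). The total loss over \(r-1\) steps is at most \(2C_3 r(K+\log d)\leq d/2\) provided \(C_4\geq4C_3\). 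This closes the induction.

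We also need at each step:
\begin{itemize}
\item \(d_i\geq d_\alpha\). This needs care, since \(d/2\) might fall below \(d_\alpha\). We fix it by requiring \(d\geq 2d_\alpha\), which holds if \(d_\alpha\) is replaced by \(\max(d_\alpha,\cdot)\). Alternatively, note that \(d\geq C_4(K+\log d)\) with \(C_4\) large forces \(d\) to be large, because \(d\geq C_4\log d\).
\item \(d_i\geq C_3(K_i+\log d_i)\). This follows from \(d/2\geq 2C_3(K+\log d)\).
\end{itemize}

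Finally, the sets \(X_0\subseteq V(G_0)\), \(X_1\subseteq V(G_1)\subseteq N(X_0)\), and so on, are pairwise disjoint. Indeed, each later graph lies inside \(N(X_i)\), which is disjoint from \(X_i\), and these graphs are nested. Every vertex of \(G_{i+1}\) has a neighbour in \(X_i\). Hence the last graph \(G_{r-1}\) is non-empty, and choosing any vertex \(\{x\}\) of it, the sequence \((X_0,\dots,X_{r-2},\{x\})\) is a dominating \(K_r\)-model. Formally, this is repeated application of \cref{obs:prepend} from the inside out.

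The main obstacle is bookkeeping, not new ideas. The degree ratio \(K\) must be measured against the shrinking average degree, and \cref{lem:low-cost} requires \(d_i\geq d_\alpha\). Both are handled by the single uniform bound \(d_i\geq d/2\), which absorbs the per-step loss \(O(K+\log d)\) over \(r\) steps into the assumption \(d\geq C_4 r(K+\log d)\).
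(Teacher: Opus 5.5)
Your proposal is correct and matches the paper's argument. You iterate \cref{lem:low-cost} $r-1$ times with degree-ratio parameter $K_i=Kd/d_i\le 2K$, keep $d_i\ge d/2$ so the total loss $O(r(K+\log d))$ is absorbed by $d\ge C_4r(K+\log d)$, and assemble the model via \cref{obs:prepend}. Your explicit check that $d_i\ge d_\alpha$ (most simply via $d\ge C_4K\ge C_4$) fills in a point the paper leaves implicit; just define $d_{i+1}:=d_i-C_3(K_i+\log d_i)$ as a parameter rather than as $\d(G_{i+1})$, so that $\log d_i\le\log d$ holds automatically.
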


\begin{proof}
The result is immediate for \(r=1\).  Starting with \(G_0=G\), apply
\cref{lem:low-cost} iteratively. More precisely, put $d_i=d-Ci(K+\log d)$,
where \(C\) is a sufficiently large absolute constant.  As long as
\(i<r\), apply the lemma to \(G_{i-1}\), with lower average-degree
parameter \(d_{i-1}\) and almost-regularity parameter at most \(2K\).
This produces a
non-empty connected set \(X_i\subseteq V(G_{i-1})\) and a subgraph $G_i\subseteq G_{i-1}[N_{G_{i-1}}(X_i)]$.
Taking \(C_4\) to be sufficiently large, in each of the first $r-1$ steps we maintain $\d(G_i)\geq d_i\geq d/2$ and $\Delta(G_i)\leq Kd\leq2K\d(G_i)$, so we can apply \cref{lem:low-cost} again. By choosing any vertex of \(G_{r-1}\) to be \(X_r\), starting with \(X_r\), and repeatedly using \cref{obs:prepend}, we extract a dominating \(K_r\)-model \((X_1,\ldots,X_r)\).
\end{proof}

\proofsection{The highly unbalanced bipartite case}\label{sec:unbalanced}

We now turn to the case when outcome (i) of \cref{lem:dichotomy} holds. 
Like in the almost-regular case, our strategy is to take a random dominating set and then join together components to find a connected subgraph whose neighbourhood could then serve as the next bag of the model. However, in order to iterate, we need to ensure that we are passing to another highly unbalanced bipartite graph by controlling the change in parameters each time components are merged. This is the purpose of the following lemma, which will play a role similar to Lemma~\ref{lem:connect-dominating}. The parameter $\sigma$ controls the allowable cost of a merge. 

\begin{lemma}\label{lem:component-localisation}
Let \(R\) be a graph with vertex partition $(S,A,B)$ such that \(F:=R[A\cup B]\) is bipartite with parts $A$ and $B$.
Suppose that \(S\) dominates \(A\cup B\), $A \neq \emptyset$ 
and every vertex of \(A\) has degree exactly \(\delta\geq1\) in
\(F\). Let \(\sigma\geq1\), and let 
\begin{equation}\label{eq:Mdef}
M:=|A|-\sigma(\comp(R[S])-1)-\frac{\e(F)}{\sigma}.
\end{equation}
There is a non-empty connected set \(X\subseteq V(R)\) together with sets
\(A'\subseteq A\), \(B'\subseteq B\), $B' \neq \emptyset$ such that $A'\cup B'\subseteq N_R(X)$, $\deg_{F[A',B']}(a)=\delta$ for every $a\in A'$,
and
\begin{equation}\label{eq:localisation-ratio}
 \frac{|A'|}{|B'|}
 \geq
 \frac{M}{|B|+\e(F)/\sigma}.
\end{equation}
\end{lemma}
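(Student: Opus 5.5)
The plan is to run a merging procedure on the components of $R[S]$ that ends with clusters whose neighbourhoods are essentially disjoint, and then to pick the best cluster by averaging (the mediant inequality).

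\textbf{Merging.} Let $q=\comp(R[S])$ and start with the clusters $C_1,\dots,C_q$, the components of $R[S]$. At every stage each cluster $X$ is a connected subset of $V(R)$ consisting of some components of $R[S]$ together with a few vertices of $A\cup B$. Call $b\in B$ \emph{heavy} if $\deg_F(b)>\sigma$ and \emph{light} otherwise. By counting edges of $F$ there are at most $\e(F)/\sigma$ heavy vertices. We repeat the following merge steps while possible.
\begin{enumerate}
\item If a vertex $a\in A$ not yet absorbed has neighbours in two clusters $X\neq X'$, replace them by $X\cup X'\cup\{a\}$.
\item If a light vertex $b\in B$ not yet absorbed has neighbours in two clusters, do the same with $b$.
\item If an edge $ab$ of $F$ has $a\in N(X)$ and $b\in N(X')$ for distinct clusters, with $b$ light, merge through both $a$ and $b$.
\end{enumerate}
Each merge lowers the number of clusters by one, so there are at most $q-1$ merges. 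When a vertex is absorbed into a cluster, the $A$-vertices it damages are its $A$-neighbours, which may no longer be dominated with full degree. A merge therefore ruins at most $O(\sigma)$ vertices of $A$: the absorbed $a$ itself, plus at most $\sigma$ $A$-neighbours of a light $b$. Summing over merges, at most $O(\sigma)(q-1)$ vertices of $A$ are lost. I expect the constants can be tuned, for instance by merging only through a single light vertex, so that this is exactly the $\sigma(q-1)$ term in $M$.

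\textbf{Averaging.} At termination, for each cluster $X$ let $B_X=(B\cap N(X))\setminus\bigcup \text{clusters}$ and
\[
A_X=\{a\in A\cap N(X): a \text{ not absorbed},\ N_F(a)\subseteq B_X\}.
\]
Then $\deg_{F[A_X,B_X]}(a)=\delta$ for every $a\in A_X$, and $A_X\cup B_X\subseteq N(X)$. Because no merge step applies any more:
\begin{itemize}
\item every unabsorbed $a$ lies in $N(X)$ for exactly one cluster $X$, since $S$ dominates $A$;
\item every light $b$ lies in $N(X')$ for exactly one cluster $X'$;
\item every light neighbour of $a$ lies in the same cluster's neighbourhood as $a$.
\end{itemize}
So an unabsorbed $a$ can fail to be in $A_X$ only because of a heavy neighbour $b$ that is dominated by a different cluster. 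Charging each such $a$ to the edge $ab$ is where the $\e(F)/\sigma$ loss in $M$ must come from. The denominator is then $\sum_X|B_X|$, which is at most $|B|$ from the light vertices plus a multiplicity overcount from heavy vertices that lie in several $N(X)$.

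\textbf{Main obstacle.} The hardest step is this heavy-vertex accounting. I would first try to get both error terms by a single charging argument. A heavy $b$ is either placed in the $B$-side of only one cluster, at the cost of the $A$-neighbours in other clusters that depend on it, or duplicated across clusters, inflating the denominator. Choosing between these two options for each heavy $b$, for example by a random or greedy choice weighted by $\deg_F(b)/\sigma$, should charge at most $\e(F)/\sigma$ in total to each side.

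\textbf{Conclusion.} Once we have
\[
\sum_X|A_X|\ge M \quad\text{and}\quad \sum_X|B_X|\le |B|+\e(F)/\sigma,
\]
the mediant inequality gives a cluster $X$ with $B_X\neq\emptyset$ and $|A_X|/|B_X|\ge M/(|B|+\e(F)/\sigma)$. We may assume $M>0$, since otherwise any $X$ and any single $b$ work. Then some cluster has $A_X\neq\emptyset$, and hence $B_X\neq\emptyset$ because $\delta\ge1$. Taking $A'=A_X$ and $B'=B_X$ gives the lemma.
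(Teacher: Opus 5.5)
\textbf{Verdict: genuine gap.} Your handling of light vertices is sound, and no tuning is needed there: each of your merges costs at most $\sigma$ vertices of $A$, since in step~3 the absorbed $a$ is itself an $F$-neighbour of $b$. The gap is the heavy-vertex accounting that you flag as the main obstacle, and no choice between ``single'' and ``duplicated'' placement can close it.

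First, the lemma requires $B'\subseteq N_R(X)$, so a heavy $b$ cannot be duplicated into a cluster that does not already dominate it. Take $\delta=1$, $\sigma=2$ and $B=\{b^*\}$. Let $C_1,\dots,C_q$ be the components of $R[S]$, with $b^*$ adjacent only to $C_1$. Let $A=A_2\cup\dots\cup A_q$, where each $a\in A_i$ is $F$-adjacent to $b^*$ and adjacent in $S$ only to $C_i$. For $|A|>4(q-1)$ we have $M>0$. Yet none of your steps~1--3 ever applies, and every $A_X$ is empty. By contrast, $X=C_i\cup\{a\}$ with $a\in A_i$ and $|A_i|\ge 2$ works.

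Second, heavy vertices are never absorbed, so nothing bounds how many clusters dominate one. Take $\delta=1$, and let every $b\in B$ be adjacent to all of $C_1,\dots,C_q$ and have $F$-degree $2\sigma$, with its $F$-neighbours split evenly among $A_1,\dots,A_q$. Let each $a\in A_i$ see only $C_i$ in $S$. Again no step applies. For each $C_i$, every admissible choice of $A',B'$ has ratio at most $2\sigma/q$, whereas the target $M/(|B|+\e(F)/\sigma)$ tends to $(2\sigma-2)/3$ as $|B|\to\infty$. So for $q\ge4$, $\sigma>4$ and $|B|$ large, no cluster your procedure produces satisfies the conclusion. Heavy vertices must be merged through, not merely assigned.

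\textbf{The missing ingredient} is the paper's merge rule for arbitrary $b\in B$, followed by a representatives step.
\begin{enumerate}
\item \emph{Merge rule.} After your step~1, let $t(b)$ be the number of clusters adjacent to a remaining $b$. Let $m(b)$ be the number of clusters not adjacent to $b$ that are the unique cluster of some $F$-neighbour of $b$. Whenever $\deg_F(b)\le\sigma\bigl(t(b)+m(b)-1\bigr)$, absorb $b$ together with all of $N_F(b)$. This merges $t(b)+m(b)$ clusters at a cost of at most $\sigma$ vertices of $A$ per eliminated cluster. So it is paid from $\sigma(\comp(R[S])-1)$ whether or not $b$ is light; the light case covers your steps~2 and~3. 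Because all of $N_F(b)$ goes with $b$, no surviving $a$ ever loses an $F$-neighbour.
\item \emph{Representatives.} When no such merge remains, $t(b)+m(b)-1<\deg_F(b)/\sigma$ for every remaining $b$. For each of the $m(b)$ clusters $C$ counted for $b$, absorb into $C$ one $F$-neighbour of $b$ whose unique cluster is $C$. These representatives merge nothing, since each sees only its own cluster and $A$ is independent in $R$. They cost $\sum_b m(b)\le\e(F)/\sigma$ vertices of $A$. They also make every $F$-neighbour of a surviving $a$ adjacent to the cluster of $a$.
\end{enumerate}
Afterwards each surviving $b$ is dominated by $\tau(b)\le t(b)+m(b)<1+\deg_F(b)/\sigma$ clusters. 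This gives $\sum_X|B_X|\le|B|+\e(F)/\sigma$ alongside $\sum_X|A_X|\ge M$, and your mediant step then finishes the proof.
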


\begin{proof}
Let \(\mc{C}\) be the set of components of \(R[S]\).
Our strategy is to merge these components by moving particular vertices from $F$ to $S$ according to the procedure that we describe below. During this process, we will refer to the \defn{current} $S$, $F$, $A$, and $B$ to refer to the modified graphs mid-process, and similarly use \(\deg_F\) and \(N_F\) for degrees and neighbourhoods in the current graph.
 For a \defn{remaining} vertex
\(v\in A\cup B\) (that is, a vertex $v$ that has not been moved to $S$ in any previous step), let \(t(v)\) be the number of members of \(\mc{C}\) adjacent
to \(v\). Since the original \(S\) dominates \(A\cup B\), and components can
only merge as \(S\) grows, \(t(v)\geq1\) throughout.

The merging procedure is carried out in two phases, followed by an additional step to smooth out degrees. First, while some remaining \(a\in A\) satisfies \(t(a)\geq2\), move \(a\) to
\(S\). This operation reduces the number of components by at least
\(t(a)-1\geq1\), while costing one vertex. When this phase ends, every remaining \(a\in A\) is adjacent
to a unique component of \(R[S]\). Note that this remains true throughout later steps.

Next, for a remaining \(b\in B\), let \(m(b)\) be the number of components in $\mc{C}$ which are
not adjacent to \(b\) but are the unique adjacent component of at least one
vertex of \(N_F(b)\). While there is a remaining \(b\) that satisfies $\deg_F(b)\leq\sigma\bigl(t(b)+m(b)-1\bigr)$, move $\{b\}\cup N_{F}(b)$ to \(S\). This decreases the number of components by at least
\(t(b)+m(b)-1\) components, so at most \(\sigma\) vertices of $A$ are moved to $S$ per component eliminated in this phase. Consequently, the first two phases move at most $\sigma(\comp(R[S])-1)$
vertices from \(A\) to $S$ in total.

At the conclusion of the second phase, $t(b)+m(b)-1<\deg_F(b)/\sigma$ for every remaining $b\in B$. For every pair \((b,C)\) counted by \(m(b)\), choose one \defn{representative}
\(a_{b,C}\in N_F(b)\) whose unique adjacent component is \(C\). The number of representatives is at most $
\sum_bm(b)
 \leq\frac{1}{\sigma}\sum_b\deg_F(b)
 \leq\frac{\e(F)}{\sigma}.$
The last step of the procedure is to move all
chosen representatives to \(S\) simultaneously. Note that this does not change the number of components. Let the \defn{old} components be those before the third step, while \defn{final} objects are those after representatives are moved.

Let \(A^*,B^*\) be the final remaining sides, and let \(X_1,\ldots,X_s\) be the components of the final \(R[S]\). For each $i$, define $A_i=N_R(X_i)\cap A^*$ and $B_i=N_R(X_i)\cap B^*$. Observe that the sets \(A_i\) partition \(A^*\) since each vertex of \(A^*\) is still adjacent to a unique \(X_i\). If an edge
\(ab\in E(F[A^*,B^*])\) has \(a\in A_i\), then either \(b\) was already
adjacent to \(X_i\) before the last step, or the pair \((b,X_i)\)
was counted by \(m(b)\) and its chosen representative made \(b\) adjacent to
\(X_i\). Hence all of the \(\delta\) original \(F\)-neighbours of a
vertex in \(A_i\) belong to \(B_i\) (the number of neighbours does not drop since each time a vertex from $b$ was moved to $S$, all of its neighbours in $A$ were also moved). This means that if $a\in A_i$, we have
\begin{equation}\label{eq:degaibi}
\deg_{F[A_i,B_i]}(a)=\delta.
\end{equation}

For a vertex $b\in B^*$, let \(\tau(b)\) be the number of final components adjacent to $b$ (with $t(b)$ its old counterpart). Then \(\tau(b)\leq t(b)+m(b)\) since any representative adjacent to $b$ is adjacent to a unique old component. It follows that $\tau(b)-1<\deg_F(b)/\sigma$, and then 
\begin{equation}\label{eq:localisation-b-total}
 \sum_{i=1}^s|B_i|
 =\sum_{b\in B^*}\tau(b)
 \leq |B|+\frac{\e(F)}{\sigma}.
\end{equation}
At the same time, putting together our bounds on the number of vertices moved from $A$ to $S$ each part of the procedure, we get
\begin{equation}\label{eq:localisation-a-total}
 \sum_{i=1}^s|A_i|=|A^*|
 \geq |A|-\sigma(\comp(R[S])-1)-\frac{\e(F)}{\sigma}=M.
\end{equation}
If $M \leq 0$ the lemma trivially holds.
Otherwise, some \(A_i\) is non-empty so \(B_i\) is also non-empty from \eqref{eq:degaibi} (since $\delta\geq 1)$, and averaging \eqref{eq:localisation-b-total} and
\eqref{eq:localisation-a-total} gives an index \(i\) for which
\eqref{eq:localisation-ratio} holds. We can then take \(X=X_i\), \(A'=A_i\), and
\(B'=B_i\). The set \(X\) is non-empty and connected, and the degree condition follows from \eqref{eq:degaibi}.
\end{proof}

In our overall process of extracting a dominating model, it will not always be possible to simply iterate the preceding lemma. Instead, the next lemma shows that in the highly unbalanced setting, either we can find an almost-regular subgraph in which case we exit and conclude by the earlier arguments, or we can find a subgraph to which we will apply \cref{lem:component-localisation} to perform one iteration in the construction of the dominating model.

\begin{lemma}\label{lem:sparse-dominating-seed}
There are absolute constants \(C_0,d_{\beta}>0\) such that the following holds.
Let \(d\geq d_{\beta}\) be an integer and let \(Q\) be bipartite with bipartition
\((A,B)\), where $B \neq \emptyset$. Let 
$r=\frac{|A|}{|B|}$. Suppose that $\deg_Q(a)=d$ for every $a\in A$, $r \geq d^{100}$, and $\deg_Q(b)\geq r$ for every $b\in B$
Then at least one of the following holds.
\begin{enumerate}[label=\textup{(\alph*)}]
 \item\label{item:seed-regular}
 \(Q\) contains a subgraph \(H\) such that $\d(H)\geq\frac{d}{C_0\log d}$ and $\Delta(H)\leq C_0\log d\,\d(H)$.

 \item\label{item:seed-structured}
 There are pairwise disjoint sets \(S_0,A_0,B_0\), with
 \(S_0\subseteq V(Q)\), \(A_0\subseteq A\), and \(B_0\subseteq B\), and a
 bipartite graph \(F\subseteq Q[A_0,B_0]\) such that
\begin{enumerate}[label={(\roman*)}, nosep]
  \item $S_0\text{ dominates }A_0\cup B_0\text{ in }Q$, \label{eq:seed-dominates}
  \item $\comp(Q[S_0])\leq |B|d^{-38}$, \label{eq:seed-components}
  \item $|A_0|\geq(1-C_0d^{-20})|A|$, and \label{eq:seed-a-size}
  \item there is an integer $\delta$ such that $\deg_F(a)=\delta\geq d-C_0\log d$ for all $a\in A_0$ \label{eq:seed-degree}
\end{enumerate}
\end{enumerate}
\end{lemma}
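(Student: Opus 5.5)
The plan is a two-stage random construction. First I sample a sparse subset of $B$ to dominate almost all of $A$ at small degree cost. Then I sample a very sparse subset of $A$ to dominate $B$ and merge the resulting components. Outcome \ref{item:seed-regular} is reserved for the situation where the merging provably fails.

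\textbf{Step 1 (dominating $A$).} Put each $b\in B$ into $S_B$ independently with probability $p=C\log d/d$. For $a\in A$, the variable $|N_Q(a)\cap S_B|$ is binomial with mean $C\log d$. By \cref{lem:chernoff}, $\bb{P}(N_Q(a)\cap S_B=\varnothing)\le d^{-C}$ and $\bb{P}(|N_Q(a)\cap S_B|\ge 2C\log d)\le d^{-C/3}$. Call $a$ \emph{bad} if either event occurs. For $C$ large, the expected number of bad vertices is at most $d^{-20}|A|$, so by Markov we may keep an outcome with at most $C_0d^{-20}|A|$ bad vertices. Let $A_0$ be the good vertices of $A$, minus the (few) vertices later placed in $S_0$, and let $B_0=B\setminus S_B$ minus vertices used later. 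Every $a\in A_0$ then has between $d-2C\log d$ and $d$ neighbours in $B_0$. Set $\delta=\lceil d-2C\log d\rceil$. The graph $F$ is obtained from $Q[A_0,B_0]$ by deleting edges at each $a\in A_0$ until $\deg_F(a)=\delta$. This gives \ref{eq:seed-degree}, and it gives \ref{eq:seed-a-size} provided Step 2 removes only $O(d^{-20}|A|)$ further vertices of $A$.

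\textbf{Step 2 (dominating $B$ and merging).} Put each $a\in A$ into $S_A$ independently with probability $q=d^{40}/r\le d^{-60}$. Then $|S_A|\approx d^{40}|B|=d^{40}|A|/r\le d^{-60}|A|$, which is negligible for \ref{eq:seed-a-size}. Each $b$ has at least $r$ neighbours in $A$, so it has at least $d^{40}$ expected neighbours in $S_A$. Hence, by Chernoff, every $b$ (in particular every $b\in B_0$) is dominated with overwhelming probability, and any undominated $b$ can be added to $S_0$ at negligible cost. Set $S_0=S_A\cup S_B$ (plus such patches), which gives \ref{eq:seed-dominates}. Every good $a\in S_A$ has a neighbour in $S_B$. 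So, up to the $O(d^{-C}|S_A|)$ isolated components, every component of $Q[S_0]$ contains a vertex of $S_B$.

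To count components, I adapt the random-ordering argument of \cref{lem:sampled-components} to the auxiliary graph $W$ on $S_B$, in which $b\sim b'$ if they have a common neighbour in $S_A$. This gives
\[
\bb{E}\,\comp(Q[S_0])\lesssim \bb{E}\sum_{b\in S_B}\frac{1}{\deg_W(b)+1}+d^{-C}|S_A| .
\]
Consider a \emph{typical} $b$, meaning one whose second neighbourhood $N_Q(N_Q(b))\subseteq B$ is spread out, with no set of $d^{45}$ vertices of $B$ receiving a constant fraction of the edges from $N_Q(b)$. For such $b$, its $\approx d^{40}$ sampled $A$-neighbours each bring about $C\log d$ sampled $B$-vertices, mostly distinct. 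Hence $\deg_W(b)\ge d^{39}$, and these $b$ contribute at most $p|B|d^{-39}\le |B|d^{-38}$ in total, which gives \ref{eq:seed-components}.

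\textbf{Main obstacle.} The hard part is the atypical vertices $b$, whose neighbours' neighbourhoods concentrate on a small set $T_b\subseteq B$. If the atypical $b$ carry only a $d^{-39}$-fraction of the relevant mass, they can be absorbed into the component bound, and the argument above goes through. Otherwise there are many atypical $b$, and I would aim for outcome \ref{item:seed-regular}. First I would pass to a subgraph $Q[A^\ast,B^\ast]$ in which the $A$-vertices keep degree at least $d/2$ while the $B$-side becomes much smaller and its degrees are controlled by the concentration. Then I would bucket degrees and apply \cref{lem:almost-biregular}, or \cref{lem:regularise} with $L=\mathrm{poly}(d)$, so that $\log(2L)=O(\log d)$; this yields $H$ with $\d(H)\ge d/(C_0\log d)$ and $\Delta(H)\le C_0\log d\,\d(H)$. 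Making this dichotomy quantitatively precise is what I expect to be the crux. In particular, the concentration must produce a genuinely almost-biregular piece rather than merely another unbalanced one, and the exponents $100$, $40$, $38$ leave room for some polynomial losses in $d$.
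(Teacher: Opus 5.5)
\textbf{There is a genuine gap at the step you flag as the crux.} You never resolve the vertices $b$ whose second neighbourhood is concentrated. Your global plan for them is to count atypical $b$ and, if there are many, pass to a subgraph with a smaller $B$-side and bucket degrees. That plan is neither carried out nor needed. The missing observation is that outcome~(a) puts no lower bound on the size of $H$, so a \emph{single} vertex with a concentrated second neighbourhood already yields~(a).

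The paper samples $A^\ast\subseteq A$ at rate $d^{-20}$ and sets $Z_b=N_Q(N_Q(b)\cap A^\ast)$. Suppose some $b$ with many sampled neighbours had $|Z_b|\le d^{40}$. Take $Y\subseteq N_Q(b)\cap A^\ast$ with $|Y|=d^{41}$. All $d|Y|$ edges at $Y$ land in $Z_b$, so $Q[Y,Z_b]$ has average degree at least $d$ and maximum degree at most $d^{41}$. Then \cref{lem:regularise} with $L=d^{40}$ gives~(a). Hence, if (a) fails, $|Z_b|>d^{40}$ holds deterministically for every non-thin $b$ and every outcome of $A^\ast$.

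This also removes a second unjustified step in your sketch. Your typicality condition concerns the full second neighbourhood $N_Q(N_Q(b))$, but $\deg_W(b)$ depends on the \emph{sampled} one. You give no argument that spreading transfers from the former to the latter with high probability.

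With $|Z_b|>d^{40}$ in hand, the component bound needs no Caro--Wei-type averaging. Draw $B^\ast$ independently of $A^\ast$. By Chernoff, each non-thin $b$ has $|Z_b\cap B^\ast|\ge p|Z_b|/2>pd^{40}/2$ except with probability $\exp(-pd^{40}/8)$. For $b\in B^\ast$ that is neither thin nor bad, every $z\in Z_b\cap B^\ast$ is joined to $b$ inside $Q[S_0]$ by a path $b\,a\,z$ with $a\in A^\ast$. Hence $\comp(Q[S_0])\le |E_B|+2|B^\ast|/(pd^{40})\le |B|d^{-38}$, where $E_B$ is the set of thin or bad vertices.

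Two smaller repairs are also needed:
\begin{itemize}
\item You fix the outcome of $S_B$ via Markov before drawing $S_A$. Then $S_B$ is no longer random when you need concentration of the $S_B$-part of $b$'s sampled second neighbourhood. Both samples must instead be fixed jointly at the end.
\item Removing vertices from $B_0$ (your patches, or the paper's $E_B$) lowers the $B_0$-degrees of their $A$-neighbours, contrary to your claim that every $a\in A_0$ keeps at least $d-2C\log d$ neighbours in $B_0$. You must also delete their neighbourhoods from $A_0$. This is cheap: each $a$ has only $d$ neighbours, so a union bound gives $\bb{P}(a\in N_Q(E_B))\le d\cdot d^{-1000}$.
\end{itemize}
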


\begin{proof}
Set $q=d^{-20}$ and $p=\frac{10^5\log d}{d}$. By taking \(d_{\beta}\) sufficiently large, we may assume that \(p\leq1/2\). Choose \(A^\ast\subseteq A\) by randomly sampling vertices independently with probability
\(q\). For \(b\in B\), let $\mu_b=q\deg_Q(b)$ be the expected degree of $b$ to $A^*$, and let $Z_b=N_Q\bigl(N_Q(b)\cap A^\ast\bigr)\subseteq B$ be the double neighbourhood. Call \(b\) \defn{thin} if
\(|N_Q(b)\cap A^\ast|<\mu_b/2\). From the assumptions we have
\(\mu_b\geq d^{80}\), so $\bb{P}(b\text{ is thin})\leq\exp(-d^{80}/8)$ by \cref{lem:chernoff}.

Assume that \ref{item:seed-regular} fails.

\begin{claim}\label{claim:seedlarge}
For every choice of \(A^\ast\) and every vertex \(b\) which is non-thin relative to that choice, we have $|Z_b|>d^{40}$.
\end{claim}
\begin{poc}
If this does not hold then choose a set \(Y\subseteq N_Q(b)\cap A^\ast\) of size
\(d^{41}\); this is possible since a non-thin \(b\) has at least
\(d^{80}/8\) neighbours in \(A^\ast\) for large \(d\). Every neighbour of
every \(a\in Y\) lies in \(Z_b\), and hence $\e(Q[Y,Z_b])=d|Y|$ and $\d(Q[Y,Z_b])
 =\frac{2d|Y|}{|Y|+|Z_b|}\geq d$, while \(\Delta(Q[Y,Z_b])\leq d^{41}\). Applying
\cref{lem:regularise} with \(L=d^{40}\) gives
\ref{item:seed-regular}, a contradiction.
\end{poc}

Next choose a random subset \(B^\ast\subseteq B\) by sampling each vertex independently with probability \(p\). A
non-thin vertex \(b\) is called \defn{bad} if $|Z_b\cap B^\ast|<\frac p2|Z_b|$.
Conditional on \(A^\ast\), we can use Chernoff's bound and \cref{claim:seedlarge} to compute that $\bb{P}(b\text{ is bad}\mid A^\ast)
 \leq\exp(-pd^{40}/8)$.

Let \(E_B\) be the set of vertices that are thin or bad, and let 
\[
E_A=\{a\in A:\deg_{B^\ast}(a)\notin[pd/2,2pd]\}.\]
Since \(\deg_{B^\ast}(a)\) is binomial with mean
\(pd=10^5\log d\), the previous probabilities and Chernoff's bound imply, for all sufficiently large \(d\), that $\bb{E}(|E_A|)\leq |A|d^{-1000}$ and $\bb{E}(|E_B|)\leq |B|d^{-1000}$.
For each fixed \(a\in A\), a union bound over its \(d\) neighbours gives $\bb{P}\bigl(a\in N_Q(E_B)\bigr)\leq d^{-999}$. Together with \(\bb{E}(|A^\ast|)=q|A|\), Markov's inequality shows that there is an
outcome that simultaneously satisfies
\begin{enumerate}[label={(\arabic*)}, nosep]
  \item $|A^\ast|\leq8q|A|$, \label{eq:seed-a-star-size}
  \item $|E_A|\leq d^{-100}|A|$, \label{eq:seed-ea-size}
  \item $|E_B|\leq d^{-100}|B|$, and \label{eq:seed-eb-size}
  \item $|N_Q(E_B)\cap A|\leq d^{-100}|A|$. \label{eq:seed-eb-neighbourhood-size}
\end{enumerate}
Indeed, the failure probability of \ref{eq:seed-a-star-size} is at most
\(1/8\), and the sum of the other three failure probabilities is at most
\(3d^{-899}\). 

Fix such an outcome and define
\begin{align*}
 S_0&=\{a\in A^\ast:N_Q(a)\cap B^\ast\neq\varnothing\}\cup B^\ast,\\
 A_0&=A\setminus\bigl(A^\ast\cup E_A\cup N_Q(E_B)\bigr),\\
 B_0&=B\setminus(B^\ast\cup E_B).
\end{align*}
It remains to show that these choices satify the required conditions.
Every \(a\in A_0\) lies outside \(E_A\), and hence has at least
\(pd/2\geq1\) neighbours in \(B^\ast\). If \(b\in B_0\), then \(b\) is
non-thin and non-bad, so there exist \(z\in Z_b\cap B^\ast\) and
\(a\in N_Q(b)\cap A^\ast\) with \(az\in E(Q)\). Thus, $a\in A^\ast$, $N_Q(a)\cap B^\ast\neq\varnothing$, and \(ab\in E(Q)\). This proves
\ref{eq:seed-dominates}.

Next, note that every component of \(Q[S_0]\) contains a vertex of \(B^\ast\). A component containing some \(b\in B^\ast\setminus E_B\) contains every vertex of
\(Z_b\cap B^\ast\) because each such vertex is joined to \(b\) by a path of the form baz inside \(Q[S_0]\). By \cref{claim:seedlarge} and the definition of
badness, it therefore contains more than \(pd^{40}/2\) vertices of
\(B^\ast\). Every other component contains a distinct vertex of
\(B^\ast\cap E_B\). Consequently, using \ref{eq:seed-eb-size},
\[
 \comp(Q[S_0])
 \leq |E_B|+\frac{2|B^\ast|}{pd^{40}}
 \leq d^{-100}|B|+\frac{2|B|}{10^5d^{39}\log d}
 \leq |B|d^{-38}.
\]
This proves \ref{eq:seed-components}. Making $C_0$ sufficienty large and using \ref{eq:seed-a-star-size}, \ref{eq:seed-ea-size}, and
\ref{eq:seed-eb-neighbourhood-size} also gives \ref{eq:seed-a-size}.

Finally, a vertex \(a\in A_0\) has no neighbour in \(E_B\) and at most
\(2pd\) neighbours in \(B^\ast\). It therefore has at least \(d-2pd\)
neighbours in \(B_0\). Set
\[
 \delta=\lfloor d-2pd\rfloor
\]
and, for every \(a\in A_0\), arbitrarily delete all but \(\delta\) of its edges to
\(B_0\). The resulting graph \(F\) satisfies \ref{eq:seed-degree} after making \(C_0\) sufficiently large.
\end{proof}

The next lemma is the main technical result of this section. It is derived by combining Lemmas~\ref{lem:component-localisation} and~\ref{lem:sparse-dominating-seed}.
\begin{lemma}\label{lem:one-step}
There are absolute constants \(C_5\) and \(d_{\gamma}\) such that the following
holds for every integer \(d\geq d_{\gamma}\).  Let \(Q\) be a bipartite graph
with bipartition \((A,B)\), assume that every vertex of \(A\) has degree
exactly \(d\).  If \(|A|/|B|\geq d^{100}\), then at
least one of the following holds.
\begin{enumerate}[label=\textup{(\roman*)}]
 \item\label{item:step-regular}
 \(Q\) contains a subgraph \(H\) such that $\d(H)\geq\frac{d}{C_5\log d}$ and $\Delta(H)\leq C_5\log d\,\d(H)$;
 \item\label{item:step-absorb}
 there are a non-empty connected set \(X\subseteq V(Q)\) and a
 bipartite subgraph \(Q'\subseteq Q[N_Q(X)]\) with parts $A'$, $B'$ such that every vertex of \(A'\) has the same degree
 \(d'\), where $d'\geq d-C_5\log d$ and $\frac{|A'|}{|B'|}
  \geq \frac{|A|}{|B|}(1-d^{-12})$.
\end{enumerate}
\end{lemma}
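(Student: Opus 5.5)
The plan is to first reduce to the hypotheses of \cref{lem:sparse-dominating-seed}, then feed its structured outcome into \cref{lem:component-localisation} with a suitable $\sigma$.

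\textbf{Step 1: pruning $B$.} \cref{lem:sparse-dominating-seed} needs $\deg_Q(b)\ge r$ for all $b\in B$, where $r=|A|/|B|$. Delete from $B$ all vertices of degree less than $\rho/2$, where $\rho=|A|/|B|$ is the original ratio. This removes fewer than $\rho|B|/2=|A|/2$ edges, hence touches fewer than $|A|/2$ vertices of $A$. Let $A_1$ be the vertices of $A$ with no deleted neighbour; then $|A_1|\ge|A|/2$ and every $a\in A_1$ still has degree $d$. This halves the ratio, which is unacceptable, since we need ratio loss only a factor $1-d^{-12}$.

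So instead use a finer threshold: delete $b$ with $\deg(b)<d^{-13}\rho$. At most $d^{-13}|A|$ edges are lost, so at most $d^{-13}|A|$ vertices of $A$ are affected. Removing them leaves $A_1$ with $|A_1|\ge(1-d^{-13})|A|$, each vertex of degree $d$ into the surviving $B_1$, and $|B_1|\le|B|$.

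To restore the degree hypothesis $\deg(b)\ge |A_1|/|B_1|$, iterate this deletion, or apply \cref{lem:sparse-dominating-seed} with a weakened degree hypothesis. Its proof only used $\mu_b\ge d^{80}$, which holds whenever $\deg(b)\ge d^{-13}\rho\ge d^{87}$. I would take that route, noting that the lemma's proof goes through verbatim. If outcome \ref{item:seed-regular} occurs, we get \ref{item:step-regular}.

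\textbf{Step 2: localising.} Otherwise take $S_0,A_0,B_0,F,\delta$. Form $R=Q[S_0\cup A_0\cup B_0]$ with edges among $A_0\cup B_0$ restricted to $F$. It is harmless to keep extra edges, since \cref{lem:component-localisation} only uses $F$-degrees. Apply \cref{lem:component-localisation} with $\sigma=d^{15}$. Then:
\begin{itemize}
\item $\sigma(\comp-1)\le d^{15}|B|d^{-38}=|A|d^{-23}/\rho\cdot\rho\le d^{-123}|A|$, using $|B|\le d^{-100}|A|$;
\item $\e(F)/\sigma\le d|A|/d^{15}=d^{-14}|A|$.
\end{itemize}
Hence $M\ge |A_0|-d^{-13}|A|\ge(1-2C_0d^{-13})|A|$ up to lower-order terms; the $d^{-20}$ loss from \ref{eq:seed-a-size} is negligible.

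For the denominator, $\e(F)/\sigma\le d^{-14}|A|$, which compared to $|B|$ could be large. This is the main obstacle: the ratio $|A|/|B|\ge d^{100}$ means $d^{-14}|A|\gg|B|$. So $\sigma$ must be chosen with $\e(F)/\sigma\le d^{-13}|B|$, i.e.\ $\sigma\ge d^{14}\rho$. The component term then becomes $\sigma\comp\le d^{14}\rho\,|B|d^{-38}=d^{-24}|A|$, which is fine precisely because \ref{eq:seed-components} gives $\comp\le |B|d^{-38}$. With $\sigma=d^{14}\rho$ we get
\[
M\ge(1-Cd^{-13})|A|,\qquad |B|+\e(F)/\sigma\le(1+d^{-13})|B|,
\]
so the ratio is at least $\rho(1-d^{-12})$ for large $d$.

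\textbf{Step 3: concluding.} The lemma yields a non-empty connected $X$, and sets $A'\cup B'\subseteq N_R(X)\subseteq N_Q(X)$, since $R$ is an induced subgraph of $Q$ apart from removed edges; this requires checking $N_R(X)\subseteq N_Q(X)$, which holds because $R\subseteq Q$. Take $Q'=F[A',B']$. Every vertex of $A'$ has degree $d'=\delta\ge d-C_0\log d$, giving \ref{item:step-absorb}.

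The delicate points are the choice $\sigma\approx d^{14}|A|/|B|$ balancing the two error terms, and the Step 1 bookkeeping so that the seed lemma's degree hypothesis is met without losing more than $d^{-13}$ in the ratio.
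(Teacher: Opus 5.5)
Your Steps 2 and 3 follow the paper. You feed outcome (b) of \cref{lem:sparse-dominating-seed} into \cref{lem:component-localisation} with $\sigma$ proportional to the ratio (your $\sigma=d^{14}\rho$, the paper's $\sigma=rd^{20}$). Then $\e(F)/\sigma$ is negligible against $|B|$, while $\sigma\comp(Q[S_0])$ is absorbed by $\comp(Q[S_0])\le|B|d^{-38}$. Your final ratio computation is correct.

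The gap is in Step 1. A single round of deleting every $b$ with $\deg(b)<d^{-13}\rho$, then discarding the affected vertices of $A$, does not leave the surviving $b$ with degree at least $d^{-13}\rho$. A survivor loses every neighbour that was also adjacent to a deleted vertex, so its degree in $Q[A_1\cup B_1]$ can collapse, even to zero. The weakened degree hypothesis you invoke is therefore unverified. The fix is to iterate: delete one $b$ of \emph{current} degree below $\theta=d^{-13}\rho$, together with its current neighbourhood, and repeat. Vertices of $A$ then never lose edges, and the total loss in $A$ is below $\theta|B|=d^{-13}|A|$.

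There is also an arithmetic slip. The bound $\deg(b)\ge d^{87}$ only gives $\mu_b=d^{-20}\deg(b)\ge d^{67}$, not $d^{80}$, so the proof of \cref{lem:sparse-dominating-seed} does not go through verbatim. It does still work: the thin bound becomes $\exp(-d^{67}/8)$, and \cref{claim:seedlarge} only needs $d^{41}$ neighbours in $A^\ast$. But you need to say and check this.

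All of this is avoidable, because your reason for rejecting a threshold of order $\rho$ is mistaken. You bounded $|B_1|\le|B|$ and forgot that $B$ shrinks too. If $\deg(b)<r=|A|/|B|$, deleting $b$ together with $N_Q(b)$ gives $(|A|-\deg(b))/(|B|-1)\ge(r|B|-r)/(|B|-1)=r$. The paper iterates exactly this deletion, phrased as a minimal counterexample. The ratio never decreases, and every remaining $b$ ends with degree at least the current ratio. So \cref{lem:sparse-dominating-seed} applies exactly as stated, with no loss in the ratio at this stage and no modification of the seed lemma.
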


\begin{proof} Assume without loss of generality that the lemma holds for all proper subgraphs of $Q$ satisfying the conditions of the lemma. Let $r = |A|/|B|$.
	If some
\(b\in B\) has fewer than \(r\) neighbours, delete \(b\) together with all
its neighbours. Then 
\[
 \frac{|A|-\deg_Q(b)}{|B|-1} \geq \frac{r|B|-r}{|B|-1}  =r \geq d^{100}.
\]
As the lemma holds for $Q \setminus \{b\} \setminus N_Q(b)$, by our assumption it also holds for $Q$. Thus, we assume $\deg_Q(b)\geq r$ for every $b\in B$.

Assume that \(d_{\gamma}\geq d_{\beta}\) and apply \cref{lem:sparse-dominating-seed}. If
\cref{lem:sparse-dominating-seed}\ref{item:seed-regular} holds, then we have \ref{item:step-regular} by taking a large enough \(C_5\). Otherwise, we are in the case \cref{lem:sparse-dominating-seed}\ref{item:seed-structured} and we can take
\(S_0,A_0,B_0,F,\delta\) as in that statement. 

Form the auxiliary graph \(R\) on
\(S_0\cup A_0\cup B_0\) whose edges are all edges of \(Q[S_0]\), all edges of
\(Q\) between \(S_0\) and \(A_0\cup B_0\), and all edges of \(F\). Then $R[A_0\cup B_0]=F$, and \(S_0\) dominates \(A_0\cup B_0\) in \(R\). Set $k=\comp(R[S_0])=\comp(Q[S_0])$, $m=\e(F)=\delta|A_0|$, and $\sigma=rd^{20}$.

By \cref{lem:sparse-dominating-seed}, we have $|A_0|\geq(1-C_0d^{-20})|A|$, $k\leq|B|d^{-38}$, and $m\leq d|A|$. Since \(|A|=r|B|\), the latter two inequalities give $\sigma k \leq rd^{20}|B|d^{-38}=|A|d^{-18}$ and $\frac{m}{\sigma}
 \leq\frac{d|A|}{rd^{20}}=|B|d^{-19} \leq|A|d^{-119}$. It follows that the quantity \(M\) defined in \eqref{eq:Mdef} can be bounded as
 \[
M \geq |A|(1-C_0d^{-20}-d^{-18} - d^{-119})
 \]
 which is positive for large \(d\). Now applying
\cref{lem:component-localisation} to \(R,S_0,A_0,B_0,F\) and \(\sigma\) gives a connected set \(X\) and sets
\(A',B'\subseteq N_R(X)\subseteq N_Q(X)\) such that every vertex of \(A'\) has
degree \(\delta\) in \(F[A',B']\) and
\begin{align*}
 \frac{|A'|}{|B'|} \geq
 \frac{|A_0|-\sigma(k-1)-m/\sigma}{|B_0|+m/\sigma}\geq
 r\,\frac{1-Cd^{-18}}{1+d^{-19}}\geq r(1-d^{-12}),
 \end{align*}
where the middle inequality holds for \(d_{\gamma}\) sufficiently large. Taking
\(Q'=F[A',B']\) and
\(d'=\delta\geq d-C_0\log d\geq d-C_5\log d\) proves that
\ref{item:step-absorb} holds.
\end{proof}

The next statement is a corollary of \cref{cor:almost-regular-model} and \cref{lem:one-step}, and establishes \cref{thm:degmain} in the unbalanced case.

\begin{corollary}
\label{prop:unbalanced-model}
There are absolute constants \(C_6\) and \(d_{\delta}\) such that the following
holds.  Let \(t\geq1\) and let \(d\geq d_{\delta}\) be an integer satisfying $d\geq C_6t(\log d)^2$.
If \(Q\) is bipartite with bipartition \((A,B)\), every vertex of \(A\)
has degree exactly \(d\), and
 $\frac{|A|}{|B|}\geq d^{110}$,
then \(Q\) contains a dominating \(K_t\)-model.
\end{corollary}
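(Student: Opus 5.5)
We will iterate \cref{lem:one-step} at most $t-1$ times, exiting via \cref{cor:almost-regular-model} as soon as outcome \ref{item:step-regular} occurs. Set $Q_0=Q$, $d_0=d$, $r_0=|A|/|B|\ge d^{110}$. Inductively we maintain a bipartite graph $Q_i$ with parts $(A_i,B_i)$, every vertex of $A_i$ of degree exactly $d_i$, together with non-empty connected sets $X_1,\dots,X_i$ such that $X_{j+1}\subseteq V(Q_j)$ and $Q_{j+1}\subseteq Q_j[N_{Q_j}(X_{j+1})]$. We also track the invariants
\[
d_i\ge d-C_5 i\log d \qquad\text{and}\qquad \frac{|A_i|}{|B_i|}\ge r_0(1-d^{-12})^i .
\]
For $i\le t$ we have $C_5 i\log d\le C_5 t\log d\le d/(C_6\log d)\cdot C_5$, which is at most $d/2$ once $C_6$ is large. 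Hence $d_i\ge d/2$ and $d_i\le d$. For the ratio, $(1-d^{-12})^t\ge 1-td^{-12}\ge 1/2$ because $t\le d$. So $|A_i|/|B_i|\ge d^{110}/2\ge d^{100}\ge d_i^{100}$. Finally $d_i$ is an integer that is at least $d/2\ge d_\gamma$ once $d_\delta$ is large. Thus the hypotheses of \cref{lem:one-step} hold for $Q_i$ with parameter $d_i$.

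At step $i<t-1$ (or until we have built $t-1$ bags), apply \cref{lem:one-step} to $Q_i$. There are two outcomes.

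If outcome \ref{item:step-absorb} occurs, we get $X_{i+1}$ and $Q_{i+1}$ with $d_{i+1}\ge d_i-C_5\log d_i$ and the ratio multiplied by at least $(1-d_i^{-12})$. Since $d_i\le d$, we have $\log d_i\le\log d$. The ratio loss involves $d_i^{-12}\le (d/2)^{-12}$. We therefore adjust the invariant to $(1-2^{12}d^{-12})^i$, which is still at least $1/2$. If we reach $i=t-1$ in this way, $Q_{t-1}$ is non-empty (it has a vertex of $A'$, which has a neighbour in $B'$), so we take any single vertex as $X_t$. Then $(X_1,\dots,X_t)$ is a dominating $K_t$-model by repeated use of \cref{obs:prepend}, applied backwards from $X_t$.

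If outcome \ref{item:step-regular} occurs at step $i$, then $Q_i$ contains a subgraph $H$ with $\d(H)\ge d_i/(C_5\log d_i)\ge d/(2C_5\log d)=:d^*$ and $\Delta(H)\le C_5\log d\,\d(H)$. We then apply \cref{cor:almost-regular-model} to $H$ with $r=t-i$, $K=C_5\log d$ and degree parameter $\d(H)$. This requires
\[
\d(H)\ge C_4 t\,(C_5\log d+\log \d(H)).
\]
Since $\d(H)\ge d^*$ and the right-hand side is at most $C t\log d$ (using $\d(H)\le d$), it suffices that $d/(2C_5\log d)\ge Ct\log d$. This is exactly $d\ge C_6 t(\log d)^2$ for $C_6$ large, and $\d(H)\ge d_\alpha$ follows from $d_\delta$ large. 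The corollary gives a dominating $K_{t-i}$-model in $H\subseteq Q_i$. Prepending $X_i,\dots,X_1$ via \cref{obs:prepend} then yields a dominating $K_t$-model in $Q$. Here we use that each $Q_j$ lies inside the neighbourhood of $X_j$ in $Q_{j-1}$, so subgraphs of $Q_j$ are subgraphs of $Q_{j-1}[N(X_j)]$.

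The main obstacle is bookkeeping rather than any new idea. We must check that the hypotheses of \cref{lem:one-step} survive every iteration: exact regularity on the $A$ side, integrality of $d_i$, and the ratio $d_i^{100}$ measured against the current degree rather than the original one. We must also check that the compounded losses of $C_5\log d$ in degree and $(1-O(d^{-12}))$ in ratio over up to $t\le d$ steps stay bounded. This is where the slack $d^{110}$ versus $d^{100}$ and the hypothesis $d\ge C_6t(\log d)^2$ are used. The $(\log d)^2$ is needed precisely in the exit case, where one $\log$ is lost to the regularity parameter $K$ and another to the density loss in \cref{lem:one-step}\ref{item:step-regular}.
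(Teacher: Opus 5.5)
Your proposal is correct and follows the paper's proof. You iterate \cref{lem:one-step} while maintaining $d_i\ge d/2$ and $|A_i|/|B_i|\ge \tfrac12 d^{110}\ge d_i^{100}$, exit through \cref{cor:almost-regular-model} (with $r=t-i$ and $K=C_5\log d$) when outcome (i) occurs, otherwise take a single vertex as the last bag, and prepend the bags via \cref{obs:prepend}. There is one harmless slip: a subgraph $H\subseteq Q_i$ only satisfies $\d(H)\le 2d_i\le 2d$, not $\d(H)\le d$, but since $\log(2d)\le 2\log d$ your bound $Ct\log d$ on the right-hand side still holds.
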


\begin{proof}
The assertion is clear for \(t=1\). To build a larger dominating \(K_t\)-model, we apply \cref{lem:one-step} repeatedly.  If  outcome (ii) occurs in the first $i$ steps, we obtain connected sets \(X_1,\ldots,X_i\) and a bipartite graph
\(Q_i\) with parts $A_i$, $B_i$ such that $\deg_{Q_i}(a) =: d_i\geq d-Ci\log d$ for every $a \in A_i$, and $\frac{|A_i|}{|B_i|}\geq d^{110}\prod_{j<i}(1-d_j^{-12})$.
Since \(t\leq d\), taking \(C_6\) sufficiently large allows us to maintain 
that $d_i\geq d/2$ and $\frac{|A_i|}{|B_i|}\geq \tfrac12d^{110}\geq d_i^{100}$.
This facilitates the next application of \cref{lem:one-step}.

Suppose outcome (i) occurs after $i$
successive applications yielding outcome (ii), and let $s=t-i$.
This produces a graph \(H\subseteq Q_i\) with $h:=\d(H)\geq\frac{d_i}{C_5\log d_i}$ and $\Delta(H)\leq C_5\log d_i\,h$.
Every subgraph of \(Q_i\), including \(H\), has average degree
at most \(2d_i\).  In particular, \(\log h\leq\log(2d)\).
Using the properties of $Q_i$ and the assumption on $d$, we can take \(C_6\) large enough so that $h\geq C_4s\bigl(C_5\log d_i+\log h\bigr)$.
Hence \cref{cor:almost-regular-model} gives a dominating \(K_s\)-model
in \(H\).  Repeated application of \cref{obs:prepend} prepends
\(X_i,\ldots,X_1\) in reverse order and yields the desired
dominating \(K_t\)-model.

If outcome (i) never occurs in the first \(t-1\) steps, we may
choose any vertex of \(Q_{t-1}\) as the final bag and repeatedly apply
\cref{obs:prepend} to build the dominating \(K_t\)-model.
\end{proof}

\proofsection{Proof of the main theorem}\label{sec:final}

Finally, we derive \cref{thm:degmain} from \cref{cor:almost-regular-model} and \cref{prop:unbalanced-model}.

We shall use the elementary fact that, for every fixed \(C'>0\), there
is an absolute constant \(C>0\) for which
\begin{equation}\label{eq:log-comparison}
 x\geq Ct(\log t)^2,\quad t\geq2
 \quad\Longrightarrow\quad
 x\geq C't(\log x)^2.
\end{equation}
Indeed, \(x/(\log x)^2\) is increasing for \(x>\mathrm e^2\), and at
\(x=Ct(\log t)^2\) we have $\log x\leq \log C+3\log t$ for \(t\geq2\). The required conclusion therefore follows on choosing \(C\) large enough
that $C(\log 2)^2
 \geq C'(\log C+3\log 2)^2$.

\degmain*
\begin{proof}
The case \(t=1\) is immediate, so suppose that $t\geq 2$.  Choose a subgraph \(G_0\subseteq G\)
whose average degree $d=\d(G_0)$
is maximum among all subgraphs of \(G\), and let \(n=\v(G_0)\).  Every
subgraph of \(G_0\) has average degree at most \(d\), so \(G_0\) is
\(d\)-degenerate. In particular, every subgraphs \(J \subseteq G_0\)
satisfies \(\e(J)\leq d\,\v(J)\).  By taking $C$ sufficiently large and using
\eqref{eq:log-comparison}, we may assume that
\begin{equation}\label{eq:main-large}
 d\geq C't(\log d)^2
\end{equation}
for any prescribed absolute constant \(C'\), and that $d$ is large enough to satisfy the conditions in all of the earlier statements in this section.

Apply \cref{lem:dichotomy} to \(G_0\) with $L=d^{400}$.
If \cref{lem:dichotomy}\ref{item:dichotomy-regular} holds then we have a subgraph \(H\) which, setting \(h=\d(H)\), satisfies $h\geq\frac{d}{C\log d}$ and $ \Delta(H)\leq C\log d\,h$.
Since \(h\leq d\) by the choice of \(G_0\), we can let \(C'\)
be sufficiently large in \eqref{eq:main-large} so that $h\geq C_4t(C\log d+\log h)$. The desired model can then be found by \cref{cor:almost-regular-model}.

We may therefore assume that the unbalanced outcome \cref{lem:dichotomy}\ref{item:dichotomy-unbalanced} holds. In that case, there
is a partition \(V(G_0)=A\cup B\) such that $|A|\geq\frac L4|B|$ and $\e_{G_0}(A,B)\geq\frac{nd}{8}$.
In particular, \(|B|\leq4n/L\). Delete from \(A\) every vertex with
fewer than \(d/16\) neighbours in \(B\), and call the remaining set
\(A_1\). This deletes at most \(dn/16\) crossing edges, so $\e_{G_0}(A_1,B)\geq\frac{nd}{16}$. Since $G_0$ is \(d\)-degenerate, we have $|A_1|+|B|\geq\frac n{16}$. Given that $L$ is large, we certainly have \(|B|\leq n/32\), so $|A_1|\geq\frac n{32}$ and $\frac{|A_1|}{|B|}\geq\frac L{128}$.

Set \(d^\ast=\lfloor d/16\rfloor\).  For every \(a\in A_1\), arbitrarily delete all but \(d^\ast\) of its edges to \(B\). In the resulting bipartite
graph \(Q\), every vertex in $A_1$ has degree \(d^\ast\), so $\frac{|A_1|}{|B|}
 \geq\frac{d^{400}}{128}\geq(d^\ast)^{110}$.
Ensuring that $C$ is also large enough so that $d^\ast\geq C_6t(\log d^\ast)^2$, we can then apply \cref{prop:unbalanced-model} to find the desired model.
\end{proof}

\section{Improper colourings}\label{sec:deficiency}

The goal of this section is to prove \cref{thm:deficiency}.
The next lemma can be considered as a significantly simplified variant of \cref{prop:unbalanced-model} with a weaker requirement on  degrees but a much stronger imbalance condition.

\begin{lemma}\label{lem:regbip}
Let $t$ be a positive integer and let  $G=(A,B)$ be a bipartite graph such that $|A|\geq (t-1)!|B|$ and $\deg(a) = t-1$ for each $a\in A$.
Then $G$ contains a dominating $K_t$-model.
\end{lemma}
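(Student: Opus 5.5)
We argue by induction on $t$, in the spirit of \cref{thm:degt2} but using a single vertex of $B$ as the first bag. For $t=1$ the graph has a vertex of $A$ or $B$ (if $B=\emptyset$ then $A=\emptyset$ too when $t\ge2$; for $t=1$ any vertex works, and if $G$ is empty we interpret $(t-1)!|B|\le|A|$ trivially — we may assume $A\neq\emptyset$). For $t=2$, any $a\in A$ has a neighbour $b$, and $(\{b\},\{a\})$ is a dominating $K_2$-model. For the induction step with $t\ge3$, pick $b\in B$ of maximum degree. Since $\e(G)=(t-1)|A|\ge (t-1)!\,(t-1)|B|$, the vertex $b$ has $\deg(b)\ge (t-1)\cdot(t-1)!$; actually we only need $|N(b)|\ge (t-1)|A|/|B|$ by averaging.

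The key step is to set $X_1=\{b\}$ and to find inside $N(X_1)$ a bipartite graph satisfying the hypothesis for $t-1$. The issue is that $N(\{b\})=N(b)\subseteq A$ is independent, so we cannot simply recurse there. Instead take $X_1=\{b\}\cup$ nothing and choose the next graph differently: let $A_b=N(b)$, and consider $G'=G[A_b, B\setminus\{b\}]$. Each $a\in A_b$ has exactly $t-2$ neighbours in $B\setminus\{b\}$. However $B\setminus\{b\}$ is not dominated by $b$, so $G'$ is not inside $N(X_1)$. To fix this, enlarge the first bag: let $X_1=\{b\}\cup Y$ where $Y\subseteq A_b$ is chosen so that $X_1$ is connected (automatic, it is a star) and dominates a set $B_1\subseteq B\setminus\{b\}$; then the new graph is $G[A_1,B_1]$ with $A_1=\{a\in A\setminus X_1 : N(a)\setminus\{b\}\subseteq B_1,\ b\in N(a)\}$... this loses $b$-adjacency for later bags, so the cleaner choice is: $A_1=\{a\in A_b\setminus Y: N(a)\setminus\{b\}\subseteq B_1\}$, which are all adjacent to $b\in X_1$, and $B_1\subseteq N(Y)$. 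In $G[A_1,B_1]$ every vertex of $A_1$ has degree exactly $t-2$, and all of $A_1\cup B_1\subseteq N(X_1)$, so by induction and \cref{obs:prepend} we are done provided $|A_1|\ge (t-2)!\,|B_1|$.

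The main obstacle is choosing $Y$ so that the ratio drops by at most a factor $t-1$. The natural choice is to take $B_1=N(A_b)\setminus\{b\}$ and $Y$ a set containing, for each $b'\in B_1$, one neighbour in $A_b$; then $A_1=A_b\setminus Y$, $|Y|\le|B_1|\le |B|$, and we need $|A_b|-|B_1|\ge (t-2)!\,|B_1|$, i.e. $|A_b|\ge ((t-2)!+1)|B_1|$. Since $|B_1|\le |B|-1$ and $|A_b|$ can be taken $\ge (t-1)|A|/|B|\ge (t-1)!(t-1)$ only on average, this is not obviously enough; so I would instead localise as in \cref{lem:component-localisation}: choose $b$ maximising $|N(b)|/|N(N(b))\setminus\{b\}|$-type ratio, by double counting $\sum_b |N(b)| = (t-1)|A|$ against $\sum_b |N(N(b))|\le \sum_a (t-1)(t-2)\cdot$(stuff), to find some $b$ with $|N(b)|\ge (t-1)\cdot\frac{|A|}{|B|}\cdot\frac{|B_1|}{|B|}$-ish. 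If this averaging is too lossy, fall back to the minimal counterexample trick of \cref{lem:one-step}: deleting any $b$ with few neighbours together with $N(b)$ preserves the ratio, so we may assume every $b$ has degree $\ge |A|/|B|\ge (t-1)!$, and then with $|B_1|\le$ (number of $B$-vertices reachable) we aim for $|A_b\setminus Y|\ge(t-2)!|B_1|$ via a careful choice of $b$; this counting is where I expect the real work.
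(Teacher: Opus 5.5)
\textbf{There is a genuine gap.} It sits exactly where you say ``the real work'' is. You never establish $|A_1|\ge (t-2)!\,|B_1|$, and with a first bag of the form $X_1=\{b\}\cup Y$, $Y\subseteq N(b)$, this inequality cannot be achieved in general, whatever $b$ you pick.

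Take $t=3$ and let $G$ be the $1$-subdivision of $K_5$. Then $B$ is the $5$ branch vertices and $A$ the $10$ subdivision vertices, so $|A|=2!\,|B|$ and every $a\in A$ has degree $2$. Fix $b\in B$. For $b'\neq b$, the only vertex of $N(b)$ adjacent to $b'$ is the subdivision vertex $a_{bb'}$. So if $Y\subseteq N(b)$ dominates $B_1$, then $a_{bb'}\in Y$ for every $b'\in B_1$. Hence $A_1=\{a\in N(b)\setminus Y : N(a)\setminus\{b\}\subseteq B_1\}$ is empty for every choice of $B_1$.

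Neither fallback helps. Every $b$ has degree $4\ge |A|/|B|$, so your deletion trick removes nothing. No averaging or localisation over $b$ can help either, since no single $b$ works. The lemma still holds in this example: take $X_1=\{b_1,b_2,b_3,a_{12},a_{13},a_{23},a_{14},a_{15}\}$, $X_2=\{b_4\}$, $X_3=\{a_{24}\}$.

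\textbf{The missing idea.} The $B$-part of the first bag must be a connected set $B'$ that is grown extremally, not a single vertex. The paper takes $B'\subseteq B$ maximal subject to two conditions: $G[A^{\ge 2}_{B'}\cup B']$ is connected, and $|A^{\ge 2}_{B'}|<(t-1)!\,|B'|$. Here $A^{\ge2}_{B'}$ is the set of $a\in A$ with at least two neighbours in $B'$, and these vertices go into the first bag.

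The vertices $A^1_{B'}$ with exactly one neighbour in $B'$ then play the role of your $A_b$. The set $B''=N(A^1_{B'})\setminus B'$ plays the role of your $B_1$. Your covering set $Y$ and your target inequality $|A^1_{B'}|\ge((t-2)!+1)|B''|$ reappear essentially verbatim.

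Maximality is what closes the argument. If the target inequality fails, then
\[
\e_G(A^1_{B'},B'')=(t-2)|A^1_{B'}|<(t-2)\bigl((t-2)!+1\bigr)|B''|\le(t-1)!\,|B''|,
\]
so some $b\in B''$ has at most $(t-1)!$ neighbours in $A^1_{B'}$. Since $A^{\ge2}_{B'\cup\{b\}}=A^{\ge2}_{B'}\cup\bigl(N(b)\cap A^1_{B'}\bigr)$, adding $b$ to $B'$ preserves both connectivity and the ratio invariant. This contradicts maximality.

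One also needs $A^1_{B'}\neq\emptyset$. This follows from connectivity of $G$ together with $|A|\ge (t-1)!\,|B|$, which forces $B'\neq B$.

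Without this extremal growth step, your induction does not go through.
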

\begin{proof}

We proceed by induction on $t$. When $t = 1$ or $2$ the statement is clear since a single vertex and a single edge are dominating $K_1$- and $K_2$-models respectively. So suppose $t\geq 3$. Without loss of generality we may assume that $G$ is connected. 

For a subset $B' \subseteq B$, define $A^{\geq 2}_{B'}$ to be the set of vertices $a\in A$ with at least two neighbours in $B'$. Let us choose such a subset $B' \subseteq B$ maximising $|B'|$ subject to two conditions: first, that $G[A^{\geq 2}_{B'} \cup B']$ is connected, and second, that $|A^{\geq 2}_{B'}| < (t-1)! \cdot |B'|$.
We then define $A^{1}_{B'}$ to be the set of vertices $a\in A$ with exactly 1 neighbour in $B'$, and let $B'' = N(A^{1}_{B'}) \setminus B'$ be its neighbourhood outside $B'$. 

\begin{claim}
	$A^{1}_{B'}\neq \emptyset$.
\end{claim}
\begin{poc}
Assume that $A^{1}_{B'} = \emptyset $	for a contradiction.
Since $A,B$ satisfy $|A|\geq (t-1)!|B|\geq (t-1)! |B'|$, we must have $A^{\geq 2}_{B'} \neq A$ and $B' \neq B$. Let $Z= A^{\geq 2}_{B'} \cup B'$  As $G$ is connected there is an edge joining a vertex of $Z$ to  a vertex in  $V(G)-Z$. If this edge has an end in $B'$ then its second end is in $A^{1}_{B'}$, a contradiction. So we assume this edge has ends $a \in A^{\geq 2}_{B'} $ and $b \in B \setminus B'$. Adding $b$ to $B'$ now contradicts maximality of $B'$.
\end{poc}

As every vertex in $A^{1}_{B'}$ has one neighbour in $B'$ and $(t-2)$ in $B''$ and $t \geq 3$, the above claim implies $B'' \neq \emptyset.$

\begin{claim}
If $|A^{1}_{B'}| \geq ((t-2)!+ 1) \cdot |B''|$, then $G$ contains a dominating $K_t$-model.
\end{claim}
\begin{poc}
Choose a minimal set of vertices $S \subseteq A^{1}_{B'}$ such that for every $b\in B''$ there is a neighbour of $b$ in $S$. Note that $|S|\leq |B''|$. Define $A' := A^{\geq 2}_{B'} \cup S$, and $A'':= A^{1}_{B'} \setminus S$. As $B'$ dominates $A^{1}_{B'}$ and hence $A''$ by construction, and $A'$ dominates $B''$, we have $A' \cup B'$ dominates $A'' \cup B''$. In addition, $G[A' \cup B']$ is connected since $G[A^{\geq 2}_{B'} \cup B']$ was connected and all vertices in $S$ have a neighbour in $B'$. We will use $A' \cup B'$ as the first set in our dominating model.

To find the rest of the model, observe that each vertex in $A''$ has exactly $t-2$ neighbours in $B''$, and $$|A''| = |A^{1}_{B'}| - |S|\geq |A^{1}_{B'}| - |B''| \geq (t-2)!\cdot |B''|$$ using the assumption of the claim. By the induction hypothesis, $G[A'' \cup B'']$ contains a dominating $K_{t-1}$-model $(F_1,\dotsc, F_{t-1})$. Then $(A'\cup B', F_1,\dotsc, F_{t-1})$ is a dominating $K_{t}$-model, as desired.
\end{poc}

To conclude the proof, we show that if $|A^{1}_{B'}| < ((t-2)!+ 1) \cdot |B''|$ then $|B'|$ is not maximal to reach a contradiction.
As vertices in $A^{1}_{B'}$ have exactly one neighbour in $B'$, we can bound
\begin{align*}
\e_G(A^{1}_{B'},B'') = (t-2)|A^{1}_{B'}| \leq (t-2)((t-2)! + 1) |B''| \leq (t-1)! |B''|.
\end{align*}
Hence, there is a vertex $b \in B''$ with at most $(t-1)!$ neighbours in $A^{1}_{B'}$. Call this set of neighbours $N'(b) \subseteq A^{1}_{B'}$. Note that $A^{\geq 2}_{B'\cup b}=A^{\geq 2}_{B'} \cup N'(b)$. Since $b$ is adjacent to all of $N'(b)$, and each vertex in $N'(b)$ already had a neighbour in $B'$, it follows that $G[A^{\geq 2}_{B'\cup b} \cup (B'\cup b)]$ is connected. Finally, we have
\begin{align*}
|A^{\geq 2}_{B'\cup b}| = |A^{\geq 2}_{B'}| + |N'(b)| < (t-1)! \cdot |B'| + (t-1)! = (t-1)! \cdot |B'\cup b|.
\end{align*}
Thus, $B'\cup b$ (and the corresponding set $A^{\geq 2}_{B'\cup b}$) satisfies Conditions 1 and 2 above, which contradicts the maximality of $|B'|$.
\end{proof}

We are now ready to prove \cref{thm:deficiency}, which we restate for convenience.
\deficiency*
\begin{proof}
	The case $t=1$ is trivial, so we may assume that $t > 1$.
Suppose $G$ is a minimal counterexample.
Using the constant $C$ from \cref{thm:degmain}, set $D_t = \max(Ct(\log t)^2, t)$, and
\[
c_t=D_t+(t-1)!(D_t-t+1).
\]
Define $B = \{v\in V(G) \colon \deg(v)> c_t\}$ to be the vertices of degree larger than $c_t$ and $A=V(G)\setminus B$ the remaining vertices.
If $B=\emptyset$ then $\Delta(G)\leq c_t$ so $G$ is not a counterexample.

Suppose that $a\in A$ has at most $t-2$ neighbours in $B$. Since $G$ is a minimal counterexample, the graph $G\setminus \{a\}$ may be partitioned into $t-1$ parts $X_1', \dotsc, X_{t-1}'$ such that  $\Delta(G[X'_i]) \leq c_t$.
If there were an $i$ such that $a$ has no neighbours in $X'_{i} \cap B$, then replacing $X'_i$ by $X'_i \cup \{a\}$ would give a partition of $V(G)$ with each part inducing a graph of maximum degree at most $c_t$. Hence we may assume that each $a\in A$ has at least $t-1$ neighbours in $B$. 

As $G$ is a counterexample, \cref{thm:degmain} gives $\d(G) <  D_t$.
In particular, $c_t|B| + (t-1)|A| < D_t(|A|+|B|)$, and hence
\[
|A| \geq \frac{c_t-D_t}{D_t-t+1}|B|=(t-1)!|B|.
\]
By removing edges from $G$, we can obtain a bipartite subgraph with bipartition $(A,B)$ that satisfies the conditions of \cref{lem:regbip}. This implies that $G$ contains a dominating $K_t$-model, a contradiction.
\end{proof}

\section{Concluding remarks}\label{sec:remarks}

The main result of this paper proves an upper bound of $O(t (\log t)^2)$ on the average degree sufficient to guarantee a dominating $K_t$-model. A logarithmic gap between this bound and the known lower bound $\Omega(t\log t)$  remains. It would be interesting to close this gap. We suspect that the lower bound is correct, but our methods lose an additional $\log t$  factor by separating the almost regular and  highly unbalanced cases.

Our second result shows that the Dominating Hadwiger's conjecture  (\cref{c:DomHad}) holds for improper colouring. As mentioned in the introduction, a more restrictive relaxation of Hadwiger's conjecture to clustered colorings has been established by Dujmovi\'c et al.~\cite{dujmovic2023clustered}. The proof in~\cite{dujmovic2023clustered} uses powerful structural tools which are not available in the dominating setting. Thus it would be interesting (but we expect very challenging!) to prove a similar relaxation of the Dominating Hadwiger's conjecture.

\begin{conjecture}[The Clustered Dominating Hadwiger's conjecture] 
For every  positive integer $t$ there exists $c_t>0$ such that the vertex set of any graph $G$ that does not contain a dominating $K_t$-model can be partitioned into $t-1$ parts $X_1,\dotsc, X_{t-1}$ so that $G[X_i]$ has no component with more than $c_t$ vertices for every $1 \leq i \leq t-1$.
\end{conjecture}

Finally, determining the truth of  \cref{c:DomHad} remains the central open question on dominating models. 

\subsection*{Acknowledgements}

Part of the research in this paper, leading to the revision of \cref{sec:quadratic} to include \cref{q:Bruce}, was carried out at the 2026 Barbados Graph Theory Workshop. We thank the workshop organizers and participants for creating
a stimulating working environment. 

\subsubsection*{AI Disclosure} The main results of this paper were obtained in September 2024 without any AI use. ChatGPT 5.6 Sol and Codex were later used to fill in the details of the proof of \cref{thm:degmain} and reorganize parts of it. The output was checked and edited by the authors, who take full responsibility for its correctness. ChatGPT 5.6 Sol was also used for proofreading the paper.

\bibliographystyle{style_edited}
\bibliography{references}

@article{IW25,
  title={{Dominating $K_t$-models}},
  author={Freddie Illingworth and David Wood},
  journal={Journal of Graph Theory},
  volume = {110},
  number = {4},
  pages = {448-456},
  year={2025}
}

@article{edwards2015relative,
  title={A relative of {H}adwiger's conjecture},
  author={Edwards, Katherine and Kang, Dong Yeap and Kim, Jaehoon and Oum, Sang-il and Seymour, Paul},
  journal={SIAM Journal on Discrete Mathematics},
  volume={29},
  number={4},
  pages={2385--2388},
  year={2015},
  publisher={SIAM}
}

@article{van2018improper,
  title={Improper colourings inspired by {H}adwiger's conjecture},
  author={van den Heuvel, Jan and Wood, David R},
  journal={Journal of the London Mathematical Society},
  volume={98},
  number={1},
  pages={129--148},
  year={2018},
  publisher={Wiley Online Library}
}

@article{girao2025induced,
  title={Induced subdivisions in ${K_{s,s}}$-free graphs with polynomial average degree},
  author={Gir{\~a}o, Ant{\'o}nio and Hunter, Zach},
  journal={International Mathematics Research Notices},
  volume={2025},
  number={4},
  pages={rnaf025},
  year={2025},
  publisher={Oxford University Press}
}

@book{mitzenmacher2017probability,
	title        = {Probability and computing},
	author       = {Mitzenmacher, Michael and Upfal, Eli},
	year         = 2017,
	publisher    = {Cambridge University Press, Cambridge},
	pages        = {xx+467},
	note         = {Randomization and probabilistic techniques in algorithms and data analysis},
	edition      = {Second},
	mrnumber     = 3674428
}

@article{hadwiger1943klassifikation,
  title={{\"Uber eine Klassifikation der Streckenkomplexe}},
  author={Hadwiger, Hugo},
  journal={Vierteljahrsschrift der Naturforschenden Gesellschaft in Z{\"u}rich},
  volume={88},
  pages={133--143},
  year={1943}
}

@incollection{seymour2016hadwiger,
  title={{Hadwiger's conjecture}},
  author={Seymour, Paul D.},
  booktitle={Open Problems in Mathematics},
  editor={Nash, Jr., John Forbes and Rassias, Michael Th.},
  pages={417--437},
  publisher={Springer},
  address={Cham},
  year={2016},
  doi={10.1007/978-3-319-32162-2_13}
}

@article{kostochka1984lower,
  title={{Lower bound of the Hadwiger number of graphs by their average degree}},
  author={Kostochka, A. V.},
  journal={Combinatorica},
  volume={4},
  number={4},
  pages={307--316},
  year={1984},
  doi={10.1007/BF02579141}
}

@article{thomason1984extremal,
  title={{An extremal function for contractions of graphs}},
  author={Thomason, Andrew},
  journal={Mathematical Proceedings of the Cambridge Philosophical Society},
  volume={95},
  number={2},
  pages={261--265},
  year={1984},
  doi={10.1017/S0305004100061521}
}

@article{fernandez1983maximum,
  title={{On the maximum density of graphs which have no subcontraction to $K_s$}},
  author={Fern{\'a}ndez de la Vega, Wenceslas},
  journal={Discrete Mathematics},
  volume={46},
  number={1},
  pages={109--110},
  year={1983},
  doi={10.1016/0012-365X(83)90280-7}
}

@article{girao2026dominating,
  title={{The Dominating 4-Colour Theorem}},
  author={Gir{\~a}o, Ant{\'o}nio and Illingworth, Freddie and Mohar, Bojan and Norin, Sergey and Steiner, Raphael and Tamitegama, Youri and Tan, Jane and Wood, David R. and Yip, Jung Hon},
  journal={arXiv:2605.10112 \emph{preprint}},
  year={2026},
  doi={10.48550/arXiv.2605.10112}
}

@book{appel1989every,
  title={{Every Planar Map is Four Colorable}},
  author={Appel, Kenneth and Haken, Wolfgang},
  series={Contemporary Mathematics},
  volume={98},
  publisher={American Mathematical Society},
  address={Providence, RI},
  year={1989}
}

@article{robertson1997four,
  title={{The four-colour theorem}},
  author={Robertson, Neil and Sanders, Daniel P. and Seymour, Paul and Thomas, Robin},
  journal={Journal of Combinatorial Theory, Series B},
  volume={70},
  number={1},
  pages={2--44},
  year={1997},
  doi={10.1006/jctb.1997.1750}
}

@article{janzer2023Erdos,
  title={Resolution of the {Erd{\H{o}}s--Sauer} problem on regular subgraphs},
  volume={11},
  doi={10.1017/fmp.2023.19},
  journal={Forum of Mathematics, Pi},
  author={Janzer, Oliver and Sudakov, Benny},
  year={2023},
  pages={e19}
}

@article{kuhn2025disproof,
  title={Disproof of the Odd {H}adwiger Conjecture},
  author={K{\"u}hn, Marcus and Sauermann, Lisa and Steiner, Raphael and Wigderson, Yuval},
  journal={arXiv:2512.20392 \emph{preprint}},
  year={2025},
  doi={10.48550/arXiv.2512.20392}
}

@article{delcourt2025reducing,
  title={Reducing Linear {H}adwiger's Conjecture to Coloring Small Graphs},
  author={Delcourt, Michelle and Postle, Luke},
  journal={Journal of the American Mathematical Society},
  volume={38},
  number={2},
  pages={481--507},
  year={2025},
  doi={10.1090/jams/1047}
}

@inproceedings{dujmovic2023clustered,
  title={{Proof of the Clustered Hadwiger Conjecture}},
  author={Dujmovi{\'c}, Vida and Esperet, Louis and Morin, Pat and Wood, David R.},
  booktitle={2023 IEEE 64th Annual Symposium on Foundations of Computer Science (FOCS)},
  pages={1921--1930},
  year={2023},
  publisher={IEEE},
  doi={10.1109/FOCS57990.2023.00116}
}

@misc{reed2026barbados,
  author={Reed, Bruce},
  title={{Private communication at the 2026 Barbados Graph Theory Workshop}},
  year={2026}
}
\end{document}